\documentclass[12pt]{amsart}
\usepackage{enumerate,amssymb,version,aliascnt,geometry}
\usepackage[all]{xy}
\usepackage{hyperref}
\hypersetup{
pdfauthor={Olivier Haution},
pdftitle={On rational fixed points of finite group actions on the affine space},
hidelinks
}
\geometry{left=3.5cm,right=3.5cm,top=3.5cm,bottom=3.5cm}

\DeclareMathOperator{\id}{id}
\DeclareMathOperator{\Spec}{Spec}
\DeclareMathOperator{\rank}{rank}
\DeclareMathOperator{\CH}{CH}
\DeclareMathOperator{\carac}{char}
\DeclareMathOperator{\Tr}{Tr}
\DeclareMathOperator{\Gal}{Gal}
\DeclareMathOperator{\Sing}{Sing}

\newcommand{\inv}{^G}
\newcommand{\cl}[1]{{{#1}_{s}}}
\newcommand{\Oc}{\mathcal{O}}
\newcommand{\Zz}{\mathbb{Z}}
\newcommand{\Qq}{\mathbb{Q}}
\newcommand{\Pp}{\mathbb{P}}
\newcommand{\Zl}{\mathbb{Z}_l}
\newcommand{\Aa}{\mathbb{A}}
\newcommand{\Ql}{{\mathbb{Q}_l}}
\newcommand{\Tan}{T}
\newcommand{\K}{K}
\newcommand{\stacks}[1]{\cite[\href{http://stacks.math.columbia.edu/tag/#1}{Tag {#1}}]{stacks}}

\newtheorem{theorem}{Theorem}[section]

\newaliascnt{proposition}{theorem}
\newtheorem{proposition}[proposition]{Proposition}
\aliascntresetthe{proposition}

\newaliascnt{lemma}{theorem}
\newtheorem{lemma}[lemma]{Lemma}
\aliascntresetthe{lemma}

\newaliascnt{corollary}{theorem}
\newtheorem{corollary}[corollary]{Corollary}
\aliascntresetthe{corollary}

\theoremstyle{definition}

\newaliascnt{remark}{theorem}
\newtheorem{remark}[remark]{Remark}
\aliascntresetthe{remark}

\newaliascnt{example}{theorem}

\aliascntresetthe{example}

\newaliascnt{definition}{theorem}
\newtheorem{definition}[definition]{Definition}
\aliascntresetthe{definition}

\newaliascnt{notation}{theorem}

\aliascntresetthe{notation}

\begin{document}
\begin{abstract}
Consider a finite $l$-group acting on the affine space of dimension $n$ over a field $k$, whose characteristic differs from $l$. We prove the existence of a fixed point, rational over $k$, in the following cases:

--- The field $k$ is $p$-special for some prime $p$ different from its characteristic.

--- The field $k$ is perfect and fertile, and $n = 3$.
\end{abstract}
\author{Olivier Haution}
\title[fixed points of finite group actions on the affine space]{On rational fixed points of finite group actions on the affine space}
\email{olivier.haution at gmail.com}
\address{Mathematisches Institut, Ludwig-Maximilians-Universit\"at M\"unchen, Theresienstr.\ 39, D-80333 M\"unchen, Germany}

\subjclass[2010]{}

\keywords{}
\date{\today}

\maketitle

\section{Introduction}
The following question was popularised by Serre in \cite{Serre-finite_fields}.\\

\emph{Let $k$ be a field, and $l$ a prime number different from its characteristic. Let $G$ be a finite $l$-group acting by $k$-automorphisms on the affine space $\Aa^n$. Does the action fix a $k$-rational point?}\\

The structure of the automorphism group of the affine space $\Aa^n$ over a field (the affine Cremona group) is quite mysterious when $n \geq 3$, see e.g.\ \cite{Kraft-Bourbaki}. The question above may be seen as an attempt to provide a small piece of information on this group. Of course the techniques developed to answer this question are likely to be useful to find fixed points for actions on other varieties, but the variety $\Aa^n$ is a particularly interesting test case. The fact that it is not projective prevents the use of many intersection-theoretic techniques. At the same time, one can still hope to answer the question by exploiting the simplicity of $\Aa^n$ from a cohomological point of view (its ``acyclicity'').\\

Serre mentions that the answer to his question is unknown when $G=\Zz/2$, $k=\Qq$ and $n=3$, but gives a positive answer in the two cases listed below.
\begin{itemize}
\item $k$ is finite.
\item $k$ is algebraically closed.
\end{itemize}
He also proves the existence of a fixed point when $k$ is algebraically closed and $G$ is instead cyclic of order prime to the characteristic exponent of $k$.

Esnault and Nicaise extend in \cite{Esnault-Nicaise} the list of fields $k$ for which the question has a positive answer as follows.
\begin{itemize}
\item $k$ is separably closed.
\item $k=\mathbb{R}$.
\item $k$ is a henselian discretely valued field of characteristic zero with algebraically closed residue field of characteristic unequal to $l$.
\item $k$ is a henselian discretely valued field of characteristic zero with residue field finite of cardinality $q$, and such that $l-1 \mid q$.
\end{itemize}
They also settle the question when
\begin{itemize}
\item $k$ is arbitrary and $n\leq 2$.
\end{itemize}
In fact, they prove much more in this case: if $G$ is a solvable group of order prime to $\carac k$, which acts on $\Aa^n$ with $n\leq 2$, then $(\Aa^n)\inv \in \{\Aa^0,\Aa^1,\Aa^2\}$.\\

In the present paper we further extend the list to the following cases.
\begin{itemize}
\item $k$ is pseudo algebraically closed.
\item $k$ is $q$-special for a prime number $q \neq \carac k$.
\item $k$ is perfect and fertile, and $n = 3$.
\end{itemize}

The definitions of ``$q$-special'' and ``fertile'' will be given below; a field is called \emph{pseudo algebraically closed} if every geometrically integral variety has a rational point. In this case, the existence of fixed points follows at once from Smith's theory (see \autoref{prop:Smith}), and will not be further discussed.\\

In fact, we answer the analog of Serre's question where the existence of a rational point is replaced by that of a zero-cycle of degree one (at least over fields of characteristic zero). More precisely, we prove:
\begin{theorem}
\label{th:1}
Let $k$ be a field of characteristic exponent $p$. Let $G$ be a finite group acting on $\Aa^n$. Assume that one of the following assumptions holds.
\begin{enumerate}[(i)]
\item $G$ is cyclic of order prime to $p$.
\item $G$ is an $l$-group, with $l \neq p$.
\end{enumerate}
Then the fixed locus $(\Aa^n)\inv$ supports a zero-cycle whose degree is a power of $p$. If $k$ is perfect and $n \leq 4$, then $(\Aa^n)\inv$ supports a zero-cycle of degree one.
\end{theorem}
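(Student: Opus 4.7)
The plan is to use the $q$-special case (abstract, first bullet) combined with a prime-to-$q$ degree trick. For each prime $q \neq p$, let $k_q \subseteq k^{\mathrm{sep}}$ be the fixed field of a pro-$q$-Sylow subgroup of $\Gal(k^{\mathrm{sep}}/k)$; then $k_q$ is $q$-special and every finite subextension $k \subseteq L \subseteq k_q$ has $[L:k]$ prime to $q$. In case (ii), the $q$-special case applied over $k_q$ yields a $k_q$-rational fixed point, which is defined over some finite $L$ and thus provides a closed point of $(\Aa^n)\inv$ of residue degree prime to $q$. Letting $q$ range over all primes distinct from $p$, the $\gcd$ of these degrees has no prime factor other than $p$, hence is a power of $p$: this delivers the zero-cycle of $p$-power degree. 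Case (i) is treated by decomposing the cyclic group $G$ as the product $\prod_l G_l$ of its Sylow subgroups and reducing to case (ii); the abelianness of $G$ ensures each $G_l$ is normal, but some care is needed since the iterated fixed loci $(\Aa^n)^{G_l}$ are not necessarily affine spaces.

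For the second claim, assume $k$ is perfect and $n \leq 4$. If $\carac k = 0$ then $p = 1$ and the first claim already yields a zero-cycle of degree $1$. Assume therefore $\carac k = p > 0$. In view of the first claim, it suffices to exhibit a zero-cycle on $(\Aa^n)\inv$ of degree prime to $p$, since a B\'ezout-type combination with one of $p$-power degree then yields degree $1$. For $n \leq 2$, Esnault--Nicaise make $(\Aa^n)^G$ into an affine space, hence provide a $k$-rational point. For $n = 3$, I invoke the fertile case (abstract, second bullet): since $k$ need not be fertile, pass to the fixed field $k_p$ of a pro-$p$-Sylow of $\Gal(k^{\mathrm{sep}}/k)$, which is perfect and of supernatural degree prime to $p$; verify that $k_p$ (or a suitable further enlargement of prime-to-$p$ degree over $k$) is fertile; apply the fertile case to obtain a rational fixed point; and descend to a closed point of $(\Aa^n)\inv$ of degree prime to $p$. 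The case $n = 4$ requires an additional reduction, for instance by restricting to the fixed locus of a well-chosen non-trivial cyclic subgroup of $G$, thereby lowering the dimension and bringing us back to the $n = 3$ situation.

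The main obstacle will be the second claim in positive characteristic. Ensuring fertility of $k_p$ (or an appropriate enlargement) over an arbitrary perfect field $k$ is not automatic and requires a delicate field-theoretic input. The case $n = 4$ is more delicate still, since it is not directly covered by the results stated in the abstract, and calls for a genuinely new geometric reduction. Finally, the cyclic case of the first claim is technically mostly bookkeeping, but does require a careful inductive framework to handle the fact that intermediate fixed loci need not be affine spaces.
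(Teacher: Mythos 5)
Your argument is circular at its core. The ``$q$-special case'' you invoke is precisely the Corollary \emph{of} \autoref{th:1} stated in \S 3.4: over a $q$-special field a zero-cycle of $p$-power degree forces a rational point, and that is the only proof of the $q$-special statement available --- there is no independent argument for it, and none of the previously known cases (finite, separably closed, $\mathbb{R}$, henselian, $n\leq 2$) covers general $q$-special fields. So you cannot feed it back in to deduce \autoref{th:1}. The actual proof goes in the opposite direction and is purely about Euler characteristics: Smith theory (\autoref{prop:Smith}) gives $\chi((\Aa^n)\inv)=1$ for an $l$-group, the Deligne--Lusztig fixed-point formula gives the same for a cyclic group of order prime to $p$ (this is how case (i) is handled --- not by a Sylow decomposition, which indeed runs into the problem you flag that $(\Aa^n)^{G_l}$ is no longer an affine space), and then \autoref{prop:chi_index} shows that \emph{any} variety $X$ supports a zero-cycle of degree $p^m\chi(X)$ (via Gabber's alterations and the Kato--Saito wild different), respectively of degree $\chi(X)$ when $\carac k=0$ or $k$ is perfect and $\dim X\leq 3$ (via resolution of singularities, $\chi=\deg c(\Tan_X)$, and a moving lemma).

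The second half of your proposal has independent gaps even if one granted the inputs. The fertile case is \autoref{th:2}, which is restricted to $l$-groups acting on $\Aa^3$ over a perfect fertile field, so it says nothing about case (i) or about $n=4$; moreover the fixed field $k_p$ of a pro-$p$-Sylow subgroup has no reason to be fertile (fertility is not a consequence of the Galois group being prime-to-$p$, and you offer no construction of a fertile prime-to-$p$ extension). Your $n=4$ reduction --- passing to the fixed locus of a cyclic subgroup --- fails because that fixed locus is not an affine space, so the $n=3$ result does not apply to the residual action. In the paper, $n\leq 4$ is handled uniformly: a non-trivial action forces $\dim(\Aa^n)\inv\leq 3$, and \autoref{prop:chi_index}(i) (resting on Cossart--Piltant resolution of threefolds over a perfect field) then gives a zero-cycle of degree $\chi=1$ directly, with no case division and no appeal to fertility.
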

The proof is based on Hironaka's resolution of singularities in characteristic zero (and recent results of Cossart-Piltant for threefolds in arbitrary characteristic), on Gabber's theorem on alterations, and on results of K.\ Kato and T.\ Saito concerning wild ramification.

Let $q$ be a prime number. A field $k$ is \emph{$q$-special} if the degree of every finite extension of $k$ is a power of $q$. Over such a field, the presence of a zero-cycle of degree prime to $q$ on a variety is equivalent to that of a rational point. Thus we obtain a positive answer to the original question when $k$ is a $q$-special field with $q\neq \carac k$.\\

A related question concerns the existence of non-trivial separable forms of the affine space: if $X$ is $k$-variety such that $X_L \simeq \Aa^n_L$ for some algebraic separable field extension $L/k$, do we have $X  \simeq \Aa^n_k$? It is easy to see that the answer is positive when $n \in \{0,1\}$. This is also the case when $n=2$ by a result of Shafarevich (see \cite{Sha-infinite}, \cite[(5.8.1)]{GD-Aut-affine}, \cite{Kam-forms}), but again, very little is known as soon as $n \geq 3$, even for $k=\mathbb{R}$ and $L=\mathbb{C}$ (see \cite[Remark 4]{Kraft-Bourbaki}). The arguments of the proof of \autoref{th:1} allow us to show (in \S\ref{sect:forms}) that $X$ at least supports a zero-cycle whose degree is a power of the characteristic exponent of $k$.\\

The second result of this paper concerns the case $n=3$ in the question of Serre:
\begin{theorem}
\label{th:2}
Let $G$ be a finite $l$-group acting on $\Aa^3$ over a perfect field $k$, whose characteristic differs from $l$. Then the action fixes a $k(\!(t)\!)$-rational point.
\end{theorem}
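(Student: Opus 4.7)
The plan is to leverage \autoref{th:1} together with the henselian structure of $k(\!(t)\!)$. Since $k$ is perfect and $n = 3 \leq 4$, \autoref{th:1} applied in case (ii) provides a zero-cycle of degree one on $Y := (\Aa^3)\inv$ over $k$. Because $|G|$ is a power of $l \neq \carac k$, the subscheme $Y$ is smooth over $k$, and Smith theory (\autoref{prop:Smith}) shows $Y_{\bar k}$ is connected; hence $Y$ is geometrically integral.

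Next I would choose a $G$-equivariant smooth projective compactification $X'$ of $\Aa^3$ over $k$, available by Cossart--Piltant resolution of singularities for threefolds, and let $\bar Y$ be the scheme-theoretic closure of $Y$ in $X'$. Since $Y$ is geometrically integral and open in the smooth fixed locus $(X')^G$, the closure $\bar Y$ is a smooth projective geometrically integral $k$-variety containing $Y$ as a dense open subset, with proper closed complement $D := \bar Y \setminus Y$. The following reduction is then immediate: any $k$-rational point $p \in \bar Y$ yields a $k(\!(t)\!)$-rational point of $Y$. Indeed, if $p \in Y$ we are done; and if $p \in D$ we use that $\bar Y$ is smooth at $p$ while $D$ has strictly smaller dimension at $p$ to choose a formal tangent direction at $p$ transverse to $D$. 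This direction is realized, via henselian lifting of smooth morphisms over $k[[t]]$, by a genuine $k[[t]]$-point of $\bar Y$ specializing to $p$, whose generic fibre is a $k(\!(t)\!)$-rational point of $Y$.

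The theorem thus reduces to producing a $k$-rational point on $\bar Y$, and this is the main obstacle: a smooth projective geometrically integral $k$-variety with a zero-cycle of degree one need not possess a $k$-rational point in general. To overcome it, one must exploit the specific structure of $\bar Y$---as (the closure of) the fixed locus of an $l$-group acting on the smooth projective rational threefold $X'$---together with the deep input of \autoref{th:1}: Gabber's alteration theorem and the Kato--Saito machinery on wild ramification, applied directly over the henselian discretely valued field $k(\!(t)\!)$, to extract a rational point on $\bar Y$ from the zero-cycle via a specialization argument.
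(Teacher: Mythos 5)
Your reduction is sound as far as it goes: it suffices to produce a $k$-rational point on a smooth compactification of $Y=(\Aa^3)\inv$ (this is exactly the equivalence the paper records at the start of \S\ref{sect:A3}, via Nishimura's lemma and the valuative criterion), and minor issues aside --- you do not need a $G$-equivariant compactification of $\Aa^3$, only a smooth compactification of $Y$ itself, which has dimension $\leq 2$ and so is covered by Abhyankar/Lipman rather than Cossart--Piltant --- this matches the paper's first step. But the proposal stops precisely where the proof has to begin. You correctly observe that a zero-cycle of degree one on a smooth projective variety does not in general yield a rational point, and then the final paragraph offers no argument for why it does here: ``apply Gabber and Kato--Saito over $k(\!(t)\!)$ via a specialization argument'' is not a proof, and it is not what happens in the paper (those tools enter only in the positive-characteristic part of \autoref{th:1} and play no role in \autoref{th:2}).

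The missing content is the following. After splitting on $\dim S$ (the cases $0,1,3$ being easy, with $\dim S=1$ handled because the boundary of a smooth compactification of an affine curve is a nonempty zero-dimensional geometrically connected set, hence a rational point over a perfect field), the serious case is $\dim S=2$. There one takes a smooth compactification $S'$ with snc boundary $D=S'\setminus S$ and shows, using \autoref{prop:Smith} and the exact sequences relating $H^*_c(S)$, $H^*(S')$, $H^*(D)$, that $D$ is a geometrically connected complete curve with $H^1(D_{\cl{k}},\Ql)=0$. The key geometric input, which has no counterpart in your proposal, is \autoref{lemm:Tan}: because $S$ is a smooth hypersurface in $\Aa^3$ and $K_0(\Aa^3)=\Zz$, the class $[\Tan_S]$ is trivial in $K_0(S)$, so $c_2(\Tan_{S'})$ restricts to zero on $S$ and hence, by the localisation sequence, is supported on $D$; this pushes a zero-cycle of degree $\chi(S)=1$ onto the \emph{boundary} $D$, not merely onto $S'$. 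One then proves (\autoref{prop:h1}) that such a $D$ --- a tree of genus-zero curves over $\cl{k}$ --- with a zero-cycle of odd degree has a $k$-point, the irreducible case reducing to Springer's theorem for conics. Without these two steps (transferring the degree-one cycle to the boundary, and the tree-of-conics analysis), the ``main obstacle'' you identify remains exactly that, and the proof is incomplete.
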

The conclusion of \autoref{th:2} may be reformulated by saying that any compactification of $(\Aa^3)\inv$ has a $k$-rational point. A field is called \emph{fertile} if any dense open subvariety of a smooth variety with a rational point has itself a rational point. This notion was introduced by Pop in \cite{Pop-large} (he calls such fields ``large fields''); the terminology ``fertile'' is due to Moret-Bailly. Any finite extension of a fertile field is fertile, and every henselian field is fertile (more examples and references may be found e.g.\ in \cite{Pop-Survey}). Thus we obtain a positive answer to the question of Serre when $k$ is perfect and fertile, and $n=3$.

The notation and conventions used in the paper are given in \S\ref{sect:Notations}. \autoref{th:1} is proved in \S\ref{sect:zero-cycles}, and \autoref{th:2} in \S\ref{sect:A3} and \S\ref{sect:h1}.\\

\textbf{Acknowledgement:} I thank Jean-Louis Colliot-Th\'el\`ene for his remarks. I thank Mathieu Florence and Philippe Gille for pointing out the applications to separable forms of the affine space. I thank Johannes Nicaise whose suggestions led to considerable simplifications in the proof of \autoref{th:2}.
 
\section{Notation}
\numberwithin{theorem}{subsection}
\numberwithin{lemma}{subsection}
\numberwithin{proposition}{subsection}
\numberwithin{corollary}{subsection}
\numberwithin{example}{subsection}
\numberwithin{notation}{subsection}
\numberwithin{definition}{subsection}
\numberwithin{remark}{subsection}

\label{sect:Notations}
\emph{In the whole paper, we work over a base field $k$, and fix a prime number $l$ unequal to its characteristic.}

\subsection{Varieties} We denote by $\cl{k}$ a separable algebraic closure of $k$. A variety, or $k$-variety, is a reduced separated scheme of finite type over $k$. Closed subsets of a variety will be considered as closed subvarieties using the reduced structure. If $L/k$ is a field extension and $X$ a $k$-variety, we will write $X_L$ for the $L$-variety obtained by extending scalars and taking the underlying reduced scheme. We will say that a variety $X$ is geometrically irreducible, resp.\ connected, if $X_\cl{k}$ is irreducible, resp.\ connected (the empty set is neither connected nor irreducible). The residue field of a point $x$ of $X$ will be denoted by $\kappa(x)$. A variety is complete if it is proper over $k$. A variety is a compactification of $X$ if it is complete and contains $X$ as a dense open subscheme; a compactification always exists by a theorem of Nagata \cite{L-Nagata}.

The index $n_X$ of a variety $X$ is the g.c.d.\ of the degrees of its closed points (it is zero when $X= \varnothing$). 

The subset $\Sing(X)$ of points $x \in X$ such that the local ring $\Oc_{X,x}$ is not regular is closed in $X$ by a theorem of Zariski \cite[(6.12.5)]{ega-4-2}. The variety $X$ is regular when $\Sing(X) = \varnothing$. 

When $X$ is a smooth variety, we denote by $\Tan_X$ its tangent bundle.

We denote by $\Aa^n$ the $k$-variety $\Spec k[t_1,\cdots,t_n]$.

\subsection{Normal crossing divisors}
\label{def:snc}
An effective Cartier divisor $D \to X$ will be called a \emph{snc divisor} if for every $x \in X$ there is a regular system of parameters $f_1,\cdots,f_n$ in $\Oc_{X,x}$ such that the ideal $\Oc(-D)_x$ of $\Oc_{X,x}$ is generated by the element $(f_1)^{r_1}\cdots(f_n)^{r_n}$ for some integers $r_i \geq 0$ (see \cite[Definition~9.1.6]{Liu}). We will say that a closed subvariety of $X$ is the \emph{support of a snc divisor in $X$} if it underlies some snc divisor.

If $Y$ is the support of a snc divisor in $X$, then $X$ is regular. One also sees easily that each irreducible component of $Y$ is regular, and that no point of codimension $n$ in $X$ belongs to more than $n$ irreducible components of $Y$.

\begin{lemma}
\label{lemm:snc_separable}
Let $X$ be a $k$-variety, and $L/k$ a separable algebraic field extension. If $Y$ is the support of a snc divisor in $X$, then so is $Y_L$ in $X_L$.
\end{lemma}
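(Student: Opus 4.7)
The plan is to verify the condition defining an snc divisor stalk by stalk on $X_L$, using the fact that scalar extension along a separable algebraic field extension is pro-étale.

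First I would set up the pieces. Since $Y$ supports a snc divisor $D$ in $X$, the variety $X$ is regular. As $L/k$ is separable algebraic, it is a filtered colimit of finite separable extensions, so $\Spec L \to \Spec k$ is a cofiltered limit of étale morphisms; consequently the base change $X \times_k L \to X$ is flat with geometrically reduced (in fact geometrically regular) fibres. In particular $X \times_k L$ is already reduced, so the definition of $X_L$ in \S2.1 gives $X_L = X \times_k L$, and this scheme is regular. I would then consider the effective Cartier divisor $D_L := D \times_X X_L$ on $X_L$, whose support (as a closed subset of $X_L$) is the set-theoretic preimage of $Y$, i.e.\ $Y_L$. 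The claim will follow once we show that $D_L$ is a snc divisor in the sense of \S2.2.

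Now fix a point $x' \in X_L$ mapping to $x \in X$. The local homomorphism $\Oc_{X,x} \to \Oc_{X_L,x'}$ is flat, and the fibre $\Oc_{X_L,x'}/\mathfrak{m}_x\Oc_{X_L,x'}$ is a localisation of the residue algebra of an étale morphism, hence a (separable algebraic) field extension $\kappa(x')/\kappa(x)$. In particular $\mathfrak{m}_x\Oc_{X_L,x'} = \mathfrak{m}_{x'}$, and $\dim \Oc_{X_L,x'} = \dim \Oc_{X,x} =: n$. Therefore, choosing by assumption a regular system of parameters $f_1,\dots,f_n \in \Oc_{X,x}$ with $\Oc(-D)_x = (f_1^{r_1}\cdots f_n^{r_n})$, the images of $f_1,\dots,f_n$ form a system of $n$ generators for $\mathfrak{m}_{x'}$ in the $n$-dimensional regular local ring $\Oc_{X_L,x'}$, hence a regular system of parameters. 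Because $\Oc(-D_L)_{x'} = \Oc(-D)_x \otimes_{\Oc_{X,x}} \Oc_{X_L,x'}$, it is generated by the same monomial $f_1^{r_1}\cdots f_n^{r_n}$. This verifies the snc condition at $x'$.

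I do not expect a real obstacle here: the statement is essentially that the notion of snc divisor is stable under étale base change, applied stalkwise. The only point requiring a small amount of care is the reducedness issue coming from the convention that $X_L$ denotes the underlying reduced scheme of $X \times_k L$; this evaporates once we invoke regularity of $X$ together with separability of $L/k$, which ensures $X \times_k L$ is itself regular and hence reduced.
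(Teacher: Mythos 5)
Your proposal is correct and follows essentially the same route as the paper: both arguments check the snc condition stalk by stalk, showing that the chosen regular system of parameters $f_1,\dots,f_n$ at $x$ still generates the maximal ideal at $x'$ because the fibre ring (a localisation of $\kappa(x)\otimes_k L$, which is reduced and integral over $\kappa(x)$ by separability and algebraicity of $L/k$) is a field, and then using flatness to identify $\dim \Oc_{X_L,x'}$ with $\dim \Oc_{X,x}$. The only cosmetic difference is that you package the key fact as ``pro-\'etale base change'' while the paper argues directly that $\kappa(x)\otimes_k L$ is reduced artinian; these are the same computation.
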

\begin{proof}
It will suffice to prove that if $D \to X$ is a snc divisor, then so is $D_L \to X_L$. Let $y\in X_L$ and $x \in X$ its image. Write $A=\Oc_{X,x}$ and $B=\Oc_{X_L,y}$. Let $f_1,\cdots,f_n$ be a regular system of parameters in $A$ such that the ideal $\Oc_X(-D)_x$ of $A$ is generated by the element $(f_1)^{r_1}\cdots(f_n)^{r_n}$. Then the ideal $\Oc_{X_L}(-D_L)_y$ of $B$ is generated by the image of this element. It will therefore suffice to prove that (the images of) $f_1,\cdots,f_n$ form a regular system of parameters in $B$. Let $C=A \otimes_k L$. The noetherian ring $\kappa(x)\otimes_k L=C/(f_1,\cdots,f_n)$ is integral over $\kappa(x)$ (because $L/k$ is algebraic) and reduced (because $L/k$ is separable). Therefore the ring $C/(f_1,\cdots,f_n)$ is reduced and artinian, and so is its localisation $B/(f_1,\cdots,f_n)$. The latter is additionally local, hence must be a field, showing that $f_1,\cdots,f_n$ generate the maximal ideal of $B$. This concludes the proof, since $\dim B = \dim A =n$ by \cite[(6.1.3)]{ega-4-2} applied to the morphism $A \to B$.
\end{proof}

\subsection{Group actions}
\label{sect:group_actions}
When $G$ is a finite group, an action of $G$ on a variety $X$ will mean an action by $k$-automorphisms. The fixed locus $X\inv$ is a closed subvariety of $X$ such that $X^G(L) = X(L)^G$ for any field extension $L/k$. In fact, there is a scheme-theoretic version of the fixed locus, defined as the intersection in $X$ of the equalisers of $\id_X$ and the action of $\sigma$, where $\sigma$ runs over $G$. The variety $X^G$ is the underlying reduced scheme. When $X$ is smooth and the order of $G$ is prime to the characteristic of $k$, then the scheme-theoretic fixed locus is smooth \cite[Proposition 3.4]{Edixhoven-Neron}, hence coincides with the variety $X^G$.

\subsection{Cohomology groups} Let $X$ be a variety. We will write $H^i(X_\cl{k},\Ql)$ (resp.\  $H^i_c(X_\cl{k},\Ql)$) for the \'etale cohomology groups (resp.\ with compact supports) with $\Ql$-coefficients of the $\cl{k}$-variety $X_\cl{k}$. 

\subsection{Graphs}
By a graph, we will mean be a finite undirected graph. In other words, a graph $\Gamma$ consists in a finite set of vertices $V(\Gamma)$, and for each unordered pair of vertices $\{v_1,v_2\}$ a finite set of edges $E(\{v_1,v_2\})$. If $e \in E(\{v_1,v_2\})$ we say that $v_1$ and $v_2$ are the extremities of the edge $e$, or that $e$ is an edge between $v_1$ and $v_2$. Declaring two vertices equivalent if there is an edge between them generates an equivalence relation on $V(\Gamma)$. The graph $\Gamma$ will be called connected if there is exactly one equivalence class. A tree is a connected graph with $n-1$ edges and $n$ vertices, for some $n \geq 1$. A vertex $v$ of a tree is called a leaf if there is at most one edge one of whose extremities is $v$.

\section{Zero-cycles of degree one}
\label{sect:zero-cycles}

\subsection{Euler characteristic and index}
\begin{definition}
The Euler characteristic (with compact supports) of a variety $X$ is defined as:
\[
\chi(X) = \sum_i (-1)^i \dim_{\Ql} H^i_c(X_\cl{k},\Ql).
\]
\end{definition}
If $Z$ is a closed subvariety of $X$, then the long exact sequence of cohomology groups with compact supports \cite[III, Remark 1.30]{Milne-etale} yields the relation
\[
\chi(X) = \chi(Z) + \chi(X-Z).
\]

In the next proposition, we denote by $c$ the total Chern class with values in the Chow group.
\begin{proposition}
\label{prop:degctan}
Let $X$ be a smooth complete variety. Then $\chi(X) = \deg c(\Tan_X)$. In particular $X$ supports a zero-cycle of degree $\chi(X)$.
\end{proposition}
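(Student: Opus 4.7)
The plan is to recognize this equality as a form of the Chern--Gauss--Bonnet theorem in \'etale cohomology, and to derive it by computing the self-intersection of the diagonal in two different ways.

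First I would reduce to a clean setting. Both $\chi(X)$ and $\deg c(\Tan_X)$ are additive over the disjoint union of connected components (the former via the long exact sequence for open-closed decompositions recalled just above, the latter essentially by definition), so one may assume $X$ is connected, hence equidimensional of some dimension $n$. Only the codimension-$n$ piece of $c(\Tan_X)$ can contribute to the degree, so the statement reduces to $\chi(X) = \deg c_n(\Tan_X)$. Both sides are invariant under the base change to $\cl{k}$ (for $\chi$ by definition; for $\deg c_n$ because $c_n$ commutes with flat pullback and flat pullback of a zero-cycle on a complete variety preserves its degree), so one may further assume $k$ separably closed.

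The key computation is the self-intersection of the diagonal $\Delta \colon X \to X \times X$. This is a regular closed immersion whose normal bundle is canonically $\Tan_X$, so the self-intersection formula yields $\Delta^{*}[\Delta] = c_n(\Tan_X) \cap [X]$ in $\CH_0(X)$, giving $\deg([\Delta] \cdot [\Delta]) = \deg c_n(\Tan_X)$. On the cohomological side, Grothendieck's Lefschetz trace formula applied to the identity morphism of $X$ (equivalently, the compatibility of Poincar\'e duality with the K\"unneth decomposition on $X \times X$) identifies the cohomological self-intersection of the diagonal with $\sum_i (-1)^i \Tr(\id \mid H^i(X_{\cl{k}}, \Ql)) = \chi(X)$. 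Since the cycle class map is compatible with the degree map to a point, combining these two computations gives $\chi(X) = \deg c_n(\Tan_X)$. The final clause of the proposition is then immediate, since any zero-cycle representing $c_n(\Tan_X) \cap [X]$ has degree $\chi(X)$.

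The main obstacle, technically, is that we are not assuming $X$ projective, only complete, so the self-intersection formula and the Lefschetz trace formula must be invoked in this slightly more general setting; both are nevertheless standard (see Fulton for the intersection-theoretic input and SGA 5 for the cohomological one). If one prefers to argue projectively, Chow's lemma together with Gabber's alterations (invoked elsewhere in the paper) reduces to that case, but this extra step should not be necessary.
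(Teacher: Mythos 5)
Your proof is correct and is exactly the argument the paper gives: compute the self-intersection of the diagonal once via the self-intersection formula (yielding $\deg c_n(\Tan_X)$, since the normal bundle of the diagonal is $\Tan_X$) and once via the Lefschetz trace formula applied to the identity (yielding $\chi(X)$). The paper cites the same two inputs (Fulton's self-intersection formula and Milne's Lefschetz trace formula) and leaves the details, including the reductions you spell out, implicit.
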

\begin{proof}
This follows from the Lefschetz trace formula \cite[VI, Theorem~12.3]{Milne-etale} and the self-intersection formula \cite[Example~8.1.12]{Ful-In-98}.
\end{proof}

\begin{lemma}
\label{lemm:perfect_closure}
Let $L/k$ be a purely inseparable field extension, and $X$ a non-empty $k$-variety. Then $n_X / n_{X_L}$ is a power of the characteristic exponent of $k$.
\end{lemma}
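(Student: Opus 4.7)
My plan is the following. The case of characteristic zero ($p = 1$) is trivial since a purely inseparable extension of $k$ is then just $k$ itself, so assume $\carac k = p > 0$. I would begin by observing that, since $L/k$ is purely inseparable, $\Spec L \to \Spec k$ is a universal homeomorphism, whence so is $X_L \to X$. In particular, the closed points of $X_L$ are in bijection with those of $X$: each closed $x \in X$ has a unique closed preimage $y$, whose residue field $\kappa(y)$ is the residue field at the unique maximal ideal of the local Artinian ring $\kappa(x) \otimes_k L$.

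The key numerical input would be the claim that, for every such pair $(x, y)$, the ratio $[\kappa(x):k] / [\kappa(y):L]$ is a power of $p$. For $L/k$ finite, a tower argument reduces to the case $L = k(\alpha)$ with $\alpha^{p^n} = a \in k$; there $\kappa(x) \otimes_k L \cong \kappa(x)[T]/(T^{p^n} - a)$, and letting $p^m$ be the largest power of $p$ such that $a = b^{p^m}$ for some $b \in \kappa(x)$, the polynomial factors as $(T^{p^{n-m}} - b)^{p^m}$ with the inner factor irreducible over $\kappa(x)$. A direct computation then yields the ratio $p^m$. For general $L$, I would write $L$ as a filtered union of finite purely inseparable subextensions $L_i/k$, and check that for sufficiently large $i$ the natural map $K_i \otimes_{L_i} L \to \kappa(y)$ (where $K_i$ is the residue field of the local ring $\kappa(x) \otimes_k L_i$) is an isomorphism; this would reduce the general case to the finite case applied to $L_i/k$.

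With the numerical claim in hand, the conclusion is immediate by inspecting $\ell$-adic valuations for $\ell \neq p$: for each closed point $x \in X$ with corresponding $y \in X_L$ one has $v_\ell([\kappa(x):k]) = v_\ell([\kappa(y):L])$, so taking minima over closed points gives $v_\ell(n_X) = v_\ell(n_{X_L})$. Hence $n_X / n_{X_L}$ is a positive integer with no prime divisors different from $p$, i.e., a power of $p$. I expect the main technical hurdle to be the direct-limit step in the infinite case, namely verifying that $K_i \otimes_{L_i} L \xrightarrow{\sim} \kappa(y)$ stabilizes for large $i$; the finite case itself amounts to an essentially explicit polynomial computation.
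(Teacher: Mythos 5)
Your argument is correct and the overall skeleton is the same as the paper's (compare residue-field degrees at corresponding closed points, then pass to g.c.d.'s), but the pointwise comparison is done by a genuinely different and more computational route. The paper takes a closed point $x$ of $X_L$ with image $y$ in $X$ and works only with separable degrees: multiplicativity gives $[\kappa(y):k]_s \mid [\kappa(x):L]_s$ since $[L:k]_s=1$, and writing $[\kappa(y):k]=p^m[\kappa(y):k]_s$ yields $n_X \mid p^m[\kappa(x):L]$ with no explicit description of residue fields and, crucially, no case distinction between finite and infinite $L$. Your route instead factors $T^{p^n}-a$ over $\kappa(x)$ to compute the residue field of $\kappa(x)\otimes_k L$ exactly; this buys the slightly sharper conclusion $v_\ell([\kappa(x):k])=v_\ell([\kappa(y):L])$ for every $\ell\neq p$, at the price of the tower reduction and the direct-limit step you flag. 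That step does go through: $[K_j:L_j]$ is non-increasing in $j$ (each $K_j$ is a quotient of $K_i\otimes_{L_i}L_j$, which has $L_j$-dimension $[K_i:L_i]$) and bounded below, hence stabilizes, after which the surjections $K_i\otimes_{L_i}L_j\to K_j$ are isomorphisms by dimension count and one passes to the colimit. One point to make explicit at the end: equality of $\ell$-adic valuations for $\ell\neq p$ only shows $n_X/n_{X_L}$ is an \emph{integer} power of $p$; to get a non-negative power you should record that your pointwise exponent is $\geq 0$, i.e.\ $[\kappa(y):L]\mid[\kappa(x):k]$, whence $n_{X_L}\mid n_X$ (the paper gets this instead by extending zero-cycles from $X$ to $X_L$). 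With that noted, your proof is complete, just longer than the paper's separable-degree argument.
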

\begin{proof}
Let $p$ be the characteristic exponent of $k$. Let $x$ be a closed point of $X_L$. Its image $y$ in $X$ is a closed point, because the extension $L/k$ is algebraic. By multiplicativity of separable degrees \cite[V, Theorem~4.1]{Lang-Algebra}, we have in $\mathbb{N} \cup \{\infty\}$
\[
[\kappa(x) :L]_s \cdot [L:k]_s = [\kappa(x) : k]_s = [\kappa(x) : \kappa(y)]_s \cdot [\kappa(y) :k]_s.
\]
By assumption we have $[L:k]_s=1$, and since the extension $\kappa(x)/L$ is finite, all the above displayed separable degrees are finite. It follows that
\[
[\kappa(y) :k]_s \mid [\kappa(x) :L]_s.
\]
Since the extension $\kappa(y)/k$ is finite, there is an integer $m$ such that $[\kappa(y) :k]= p^m \cdot [\kappa(y) :k]_s$. Thus we have a chain of divisibilities
\[
n_X \mid [\kappa(y) :k] \mid  p^m \cdot [\kappa(x) :L]_s \mid p^m\cdot [\kappa(x) :L].
\]

Let now $x_1,\cdots,x_n \in X_L$ be a finite family of closed points such that $n_{X_L}$ is the g.c.d.\ of $[\kappa(x_1):L], \cdots, [\kappa(x_n):L]$. As we have just seen, we may find for each $i$ an integer $m_i$ such that $n_X \mid p^{m_i}\cdot [\kappa(x_i):L]$. Letting $m$ be the maximum of the $m_i$'s, we obtain that $n_X \mid p^m \cdot n_{X_L}$. 

On the other hand any zero-cycle on $X$ gives rise by scalars extension to a zero-cycle of the same degree on the $L$-scheme $X \otimes_k L$, and therefore also on the underlying $L$-variety $X_L$. Thus $n_{X_L} \mid n_X$, and the statement follows.
\end{proof}

\begin{proposition}
\label{prop:chi_index}
Let $X$ be a $k$-variety.
\begin{enumerate}[(i)]
\item \label{prop:chi:1} If $\carac k=0$, or if $k$ is perfect and $\dim X \leq 3$, then $X$ supports a zero-cycle of degree $\chi(X)$.
\item \label{prop:chi:2} If $\carac k=p>0$, then $X$ supports a zero-cycle of degree $p^m\cdot\chi(X)$, for some integer $m$.
\end{enumerate}
\end{proposition}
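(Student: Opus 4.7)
The plan is to induct on $d = \dim X$, exploiting at each step the additivity of $\chi$ under locally closed decompositions together with the fact that zero-cycles on a locally closed subvariety push forward to zero-cycles of the same degree on the ambient variety. Thus it suffices to stratify $X$ and find on each stratum a zero-cycle of degree matching its Euler characteristic; the sum provides the conclusion on $X$. For (ii) I first reduce to the case $k$ perfect by base-changing along $k^{\mathrm{perf}}/k$: \autoref{lemm:perfect_closure} gives $n_X/n_{X_{k^{\mathrm{perf}}}} = p^a$ for some $a \geq 0$, while $\chi(X) = \chi(X_{k^{\mathrm{perf}}})$ since \'etale cohomology is invariant under the universal homeomorphism $X_{k^{\mathrm{perf}}} \to X$.

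Assume now $k$ is perfect. The base case $d = 0$ is immediate: $X = \bigsqcup \Spec L_i$ with $L_i/k$ finite separable, so $\chi(X) = \sum[L_i:k]$, which is the degree of the sum of closed points. For $d \geq 1$, generic smoothness over the perfect base yields $\dim X_{\mathrm{sing}} < d$, so $X_{\mathrm{sing}}$ is handled by induction; we may thus assume $X$ is smooth. Stratifying by a dense affine open further reduces to $X$ smooth quasi-projective.

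For such $X$, embed $X \hookrightarrow \Pp^N$, let $\bar{X}$ be its closure, and resolve the singularities of $\bar{X}$ using Hironaka's theorem (when $\carac k = 0$) or the Cossart--Piltant theorem (when $k$ is perfect and $\dim X \leq 3$); since $X$ is smooth, the singular locus of $\bar{X}$ lies in $\bar{X} \setminus X$, so the output is a smooth projective variety $Y$ containing $X$ as a dense open with boundary $D = Y \setminus X$ of dimension $< d$. By \autoref{prop:degctan}, $Y$ supports a zero-cycle of degree $\chi(Y)$, and by the moving lemma for zero-cycles on a smooth projective variety this class is rationally equivalent to a cycle supported on $X$. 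By induction $D$ supports a zero-cycle of degree $\chi(D)$, which pushed to $Y$ and moved gives one of degree $\chi(D)$ on $X$. Their difference has degree $\chi(Y) - \chi(D) = \chi(X)$, proving (i) and also (ii) when $\dim X \leq 3$.

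The main obstacle is the construction of a smooth projective compactification when $\dim X > 3$ and $\carac k = p > 0$, the one case of (ii) that remains after the reduction to perfect base. Here neither Hironaka nor Cossart--Piltant is available, and I would substitute an alteration argument based on Gabber's theorem: for each prime $\ell \neq p$ produce a projective alteration $Y \to \bar{X}$ with $Y$ smooth projective and generic degree coprime to $\ell$, then argue $\ell$-adically, using the inductive hypothesis on the lower-dimensional non-\'etale locus of the alteration. This should yield divisibility of $\chi(X)$ by the prime-to-$p$ part of $n_X$, i.e.\ the conclusion of (ii) up to a power of $p$.
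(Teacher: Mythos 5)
Your treatment of part (i), and of the reductions for part (ii) (passage to the perfect closure via \autoref{lemm:perfect_closure}, induction on dimension, discarding closed subsets of smaller dimension), matches the paper's argument. The problem is your final paragraph, which is exactly where the real difficulty of (ii) lies, and the sketch you give there does not close. The implicit premise behind ``produce an alteration of generic degree prime to $q$ and argue $q$-adically'' is that for the finite \'etale covering $V \to U$ of degree $d$ obtained by restricting the alteration to a suitable open set, one has $\chi(V) = d\cdot\chi(U)$, so that $v_q(\chi(U)) = v_q(\chi(V))$ and the boundary can be handled by induction. In characteristic $p>0$ this multiplicativity of the compactly supported Euler characteristic fails in general, because of wild ramification along the boundary: already the Artin--Schreier covering $\Aa^1 \to \Aa^1$, $t \mapsto t^p - t$, is finite \'etale of degree $p$ with $\chi$ of source and target both equal to $1$. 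Gabber's theorem only guarantees $d$ prime to $q$; it may well be divisible by $p$, so wild ramification cannot be avoided, and the defect $d\cdot\chi(U) - \chi(V)$ is a priori an integer about which you know nothing $q$-adically. Your phrase ``using the inductive hypothesis on the lower-dimensional non-\'etale locus'' does not address this: the induction controls $v_q(n)$ of boundary pieces, not the ramification defect.

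The paper's proof supplies precisely the missing ingredient, namely the Kato--Saito theory of the wild different: one has $\deg D^{log}_{V/U} = d\cdot\chi(U) - \chi(V)$ with $D^{log}_{V/U}$ a zero-cycle class on the boundary $\overline{V}\setminus V$, and --- after further blow-ups along the boundary and a second application of Gabber's theorem to make the boundary a snc divisor --- the integrality theorem of Kato--Saito shows that this class lies in the image of $\CH_0(Y''-V)\otimes\Zz_{(q)}$. This is what converts the ramification defect into something whose degree is controlled $q$-adically by the index of the boundary, and hence by $n_{Y'}$; combining this with $v_q(n_{Y'}) \le v_q(\chi(Y'))$ (from \autoref{prop:degctan}) and the inductive bound for $Y'-V$ yields $v_q(n_U) \le v_q(\chi(U))$. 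Without this (or an equivalent control of wild ramification), your argument for (ii) in dimension $\ge 4$ is a genuine gap rather than a routine completion.
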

\begin{proof}
\eqref{prop:chi:1}: We proceed by induction on $\dim X$; if $X=\varnothing$, then $\chi(X)=0$ and $X$ supports the null zero-cycle. While proving the statement, one may replace $X$ by any dense open subvariety. Indeed assume that such a subvariety $U$ of $X$ supports a zero-cycle of degree $\chi(U)$. Then so does $X$. Since by induction $X-U$ supports a zero-cycle of degree $\chi(X-U)$, so does again $X$. Therefore $X$ supports a zero-cycle of degree $\chi(X)=\chi(U) + \chi(U-X)$. 

In particular we may replace $X$ by a smooth dense open subvariety (which exists since $k$ is perfect). Then we may find a smooth compactification $X'$ of $X$; this result is due to Hironaka \cite{Hir-64} when $\carac k = 0$, to Abhyankar (see e.g.\ Lipman's \cite{Lipman-desingularization}) when $\dim X \leq 2$, and to Cossart and Piltant \cite{CP-3} when $\dim X \leq 3$. Then $X'$ supports a zero-cycle of degree $\chi(X')$ by \autoref{prop:degctan}, and by induction $Z=X'-X$ (hence also $X'$) supports a zero-cycle of degree $\chi(Z)$. It follows that $X'$ supports a zero-cycle of degree $\chi(X') - \chi(Z)=\chi(X)$. The same is true for its dense open subvariety $X$, by a moving lemma \cite[Proposition~6.8]{Index} (see also \cite[p.599]{Colliot-Finitude}).\\

\eqref{prop:chi:2}: We let $q$ be a prime number different from $p$ and denote by $v_q$ the $q$-adic valuation on $\Zz$. We need to prove that $v_q(n_X) \leq v_q(\chi(X))$. Replacing $k$ by a perfect closure affects neither $\chi(X)$ \cite[VIII, Th\'eor\`eme 1.1]{SGA-4-2} nor $v_q(n_X)$ (\autoref{lemm:perfect_closure}). Thus we may assume that $k$ is perfect. We again proceed by induction on $\dim X$. Induction on the number of irreducible components of $X$ shows that we may assume that $X$ is irreducible (the argument is the same as beginning of the proof of \eqref{prop:chi:1}). Let $X'$ be a compactification of $X$. By the results of Gabber on alterations \cite[Introduction, Theorem 3 (1), or X, Theorem 3.5 (iii)]{Gabber-book}, we may find a smooth complete variety $Y'$, a morphism $f\colon Y' \to X'$, a non-empty open subvariety $U$ of $X'$ such that $V=f^{-1}U$ is dense in $Y'$ and $V \to U$ is \'etale and finite of degree $d$, with $d$ prime to $q$. Shrinking $U$, we may assume that $U$ is smooth and contained in $X$. It will suffice to prove that $v_q(n_U) \leq v_q(\chi(U))$ (see the argument at the beginning of the proof of \eqref{prop:chi:1}). Consider the variety $B=X'-U$ (with reduced structure). Let $X''$ be the blow-up of $B$ in $X'$, and $Y''$ the blow-up of the (possibly non-reduced) closed subscheme $f^{-1}B$ of $Y'$. There is an induced morphism $Y'' \to X''$. Using once again the result of Gabber, we find a smooth complete variety $Z''$, a morphism $g\colon Z'' \to Y''$ generically of degree prime to $q$ and such that $W=g^{-1}V$ is the complement of a snc divisor in $Z''$. We now use results of K.\ Kato et T.\ Saito. We have by \cite[Lemma~3.4.5.1]{Kato-Saito}
\begin{equation}
\label{eq:dlog}
\deg D^{log}_{V/U} = d \cdot \chi(U) - \chi(V) \in \Qq,
\end{equation}
where $D^{log}_{V/U} \in \CH_0(\overline{V} \backslash V)\otimes_{\Zz} \Qq$ is the wild different \cite[Definition 3.4.1]{Kato-Saito}, and $\CH_0(\overline{V} \backslash V)$ is the Chow group of zero-cycles on the boundary \cite[Definition 3.1.1]{Kato-Saito}. We apply \cite[Theorem~3.2.3.1]{Kato-Saito}, where the diagram (3.4) of \cite{Kato-Saito} is
\[ \xymatrix{
W\ar[r] \ar[d] & Z'' \ar[d] \ar[ddr]&\\ 
V \ar[r] \ar[d]& Y'' & \\
U \ar[rr]&& X''
}\]
and obtain that the image of $D^{log}_{V/U}$ under the natural morphism $\CH_0(\overline{V} \backslash V)\otimes_{\Zz} \Qq \to \CH_0(Y''-V)\otimes_{\Zz} \Qq$ lies in the image of the morphism $\CH_0(Y''-V) \otimes_{\Zz} \Zz_{(q)} \to \CH_0(Y''-V)\otimes_{\Zz} \Qq$ (here $\Zz_{(q)}\subset \Qq$ denotes those fractions whose denominator is prime to $q$). Since there is a morphism $Y''-V \to Y'$, we deduce that
\begin{equation}
\label{eq:vq_Dlog}
v_q(n_{Y'}) \leq v_q(n_{Y''-V}) \leq v_q(\deg D^{log}_{V/U}).
\end{equation}
On the other hand $v_q(n_{Y'}) \leq v_q(\chi(Y'))$ (by \autoref{prop:degctan}, since $Y'$ is smooth), and by induction $v_q(n_{Y'-V}) \leq v_q(\chi(Y'-V))$. Using the relation $n_{Y'} \mid n_{Y'-V}$, we obtain
\begin{equation}
\label{eq:vq_nY}
v_q(n_{Y'}) \leq v_q(\chi(Y') - \chi(Y'-V)) = v_q(\chi(V)).
\end{equation}
Combining \eqref{eq:dlog}, \eqref{eq:vq_Dlog}, \eqref{eq:vq_nY}, we deduce that $v_q(n_{Y'}) \leq v_q(\chi(U))$. Finally $n_{Y'}=n_V$ (by the moving lemma \cite[Proposition~6.8]{Index}, since $V$ is dense in the smooth variety $Y'$), and $n_U \mid n_V$ (since there is a morphism $V \to U$). We conclude that $v_q(n_{U}) \leq v_q(\chi(U))$, as required.
\end{proof}

\begin{remark}
Statement \eqref{prop:chi:1} could be improved (by removing the assumption $\dim X \leq 3$, for $k$ perfect) if we could resolve singularities in positive characteristic. On the other hand, we may see that \eqref{prop:chi:2} is optimal by considering the case when $X$ is the spectrum of a finite purely inseparable field extension of $k$. 
\end{remark}

\subsection{Actions of \texorpdfstring{$l$}{l}-groups on \texorpdfstring{$\Aa^n$}{An}}

\begin{proposition}
\label{prop:Smith}
Let $l$ be a prime number unequal to $\carac k$. Let $G$ be a finite $l$-group acting on $\Aa^n$ over $k$, and $S=(\Aa^n)\inv$. Then $S$ is smooth, geometrically connected, and satisfies $\chi(S)=1$. More precisely:
\[
H^i(S_\cl{k},\Ql)= \left\{ \begin{array}{ll}
		  0 &\mbox{ if $i \neq 0$} \\
		  \Ql &\mbox{ if $i=0$}
       		\end{array},
	\right. \text{ and } \; H^i_c(S_\cl{k},\Ql)= \left\{ \begin{array}{ll}
		  0 &\mbox{ if $i \neq 2\dim S$} \\
		  \Ql &\mbox{ if $i=2\dim S$}
       		\end{array}.
	\right.
\]
\end{proposition}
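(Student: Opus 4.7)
The plan is the following. First, since $|G|$ is a power of $l \neq \carac k$, the general smoothness statement recalled in \S\ref{sect:group_actions} applies to the action of $G$ on the smooth variety $\Aa^n$: the scheme-theoretic fixed locus is smooth, hence reduced, hence coincides with the variety $S$. In particular $S$ is smooth. \'Etale cohomology of $S_{\cl k}$ is unchanged by extending scalars to $\cl k$, so throughout the rest of the argument I may assume that $k$ is separably closed (and in fact algebraically closed, which is harmless for \'etale cohomology since $l \neq \carac k$).

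The central input is \'etale Smith theory: for a finite $l$-group $G$ acting on a variety $X$ over an algebraically closed field of characteristic different from $l$, if $H^*(X,\mathbb{F}_l)$ is that of a point, then so is $H^*(X\inv,\mathbb{F}_l)$. Applied to $X = \Aa^n$, whose $\mathbb{F}_l$-cohomology is trivial in positive degrees, this yields that $S$ is non-empty, connected, and $H^i(S,\mathbb{F}_l) = 0$ for $i > 0$. The statement can be quoted from \cite{Serre-finite_fields}, or proved by d\'evissage: a finite $l$-group admits a normal subgroup of index $l$, reducing to $G = \Zz/l$, where the assertion follows from the localization exact sequence of Smith relating $X$, $X\inv$ and the quotient. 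I expect this to be the main obstacle: ensuring that a suitable form of Smith theory is available in \'etale cohomology without properness hypotheses, since $\Aa^n$ is not proper.

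One then passes from $\mathbb{F}_l$- to $\Ql$-coefficients via the short exact sequence $0 \to \Zl \xrightarrow{l} \Zl \to \mathbb{F}_l \to 0$, which produces an injection $H^i(S,\Zl)/l \hookrightarrow H^i(S,\mathbb{F}_l)$. The right-hand side vanishes for $i > 0$ by the previous step, and $H^i(S,\Zl)$ is finitely generated over $\Zl$, so Nakayama forces $H^i(S,\Zl) = 0$ for $i > 0$. Tensoring with $\Ql$ yields the first cohomology assertion; combined with the connectedness of $S$ this establishes the first displayed formula and gives geometric connectedness over the original $k$.

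Since $S$ is smooth and connected it is irreducible, and therefore of pure dimension $d = \dim S$. Poincar\'e duality for smooth varieties over an algebraically closed field of characteristic prime to $l$ then gives $H^i_c(S,\Ql) \simeq H^{2d-i}(S,\Ql)^{\vee}(-d)$, producing the second displayed formula. Finally $\chi(S) = \sum_i (-1)^i \dim_{\Ql} H^i_c(S,\Ql) = 1$ is immediate.
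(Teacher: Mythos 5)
Your proposal is correct and follows essentially the same route as the paper: smoothness of the scheme-theoretic fixed locus from \S\ref{sect:group_actions}, Smith theory in \'etale cohomology (quoted from Serre) giving $\mathbb{F}_l$-acyclicity of $S$, a bootstrap to $\Ql$-coefficients, and Poincar\'e duality for the compactly supported statement. The only cosmetic difference is in the bootstrap: the paper inducts on $m$ via the sequences $0 \to \Zz/l^m \to \Zz/l^{m+1} \to \Zz/l \to 0$ and then passes to the limit, whereas you work at the level of $H^i(S,\Zl)=\varprojlim_m H^i(S,\Zz/l^m)$ and invoke Nakayama, which additionally requires the (true, but worth citing) finiteness of the groups $H^i(S,\Zz/l^m)$ to get the injection $H^i(S,\Zl)/l \hookrightarrow H^i(S,\mathbb{F}_l)$.
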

\begin{proof}
The variety $S$ is smooth (see \S\ref{sect:group_actions}), and by Poincar\'e duality it will suffice to prove the statements concerning $H^i(S_\cl{k},\Ql)$. Replacing $k$ by a perfect closure do not affect $H^i(S_\cl{k},\Ql)$ by \cite[VIII, Th\'eor\`eme 1.1]{SGA-4-2}, so that we may assume that $k$ is perfect.

Let us say that a variety $X$ is $A$-acyclic if the \'etale cohomology groups with coefficients in $A$ satisfy $H^0_{\text{\'et}}(X_{\cl{k}},A)=A$ and $H^i_{\text{\'et}}(X_{\cl{k}},A)=0$ for $i \neq 0$. The variety $\Aa^n$ is $\Zz/l$-acyclic \cite[Corollary~4.20]{Milne-etale}, and it follows from Smith's theory \cite[Theorem 7.5 and its corollary]{Serre-finite_fields} that $S$ is also $\Zz/l$-acyclic. Using the exact sequence
\[
0 \to \Zz/l^m \to \Zz/l^{m+1} \to \Zz/l \to 0
\]
we see by induction on $m$ that $S$ is $\Zz/l^m$-acyclic for every positive integer $m$. Taking the limit over $m$, it follows that $S$ is $\Zz_l$-acyclic, and tensoring with $\Ql$ over $\Zl$, we conclude that $S$ is $\Ql$-acyclic. 
\end{proof}

\begin{theorem}
\label{th:l-group}
Let $k$ be a field of characteristic exponent $p$, and $l$ a prime number unequal to $p$. Let $G$ be a finite $l$-group acting on $\Aa^n$ over $k$. Then $(\Aa^n)\inv$ supports a zero-cycle whose degree is a power of $p$. If $k$ is perfect and $n \leq 4$, then $(\Aa^n)\inv$ supports a zero-cycle of degree one.
\end{theorem}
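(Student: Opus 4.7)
The plan is to combine the two inputs just established: \autoref{prop:Smith}, which gives $\chi(S)=1$ for the fixed locus $S=(\Aa^n)\inv$, with \autoref{prop:chi_index}, which converts such Euler characteristic information into zero-cycles of the required degree. The resulting proof should be very short.

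For the first assertion I set $S=(\Aa^n)\inv$ and extract $\chi(S)=1$ from \autoref{prop:Smith}. If $\carac k=0$ then \autoref{prop:chi_index}\eqref{prop:chi:1} produces a zero-cycle of degree $\chi(S)=1$, which is a power of $p=1$. If $\carac k=p>0$ then \autoref{prop:chi_index}\eqref{prop:chi:2} produces a zero-cycle of degree $p^m\cdot\chi(S)=p^m$ for some $m$, again a power of $p$.

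For the second assertion, assume $k$ perfect and $n\leq 4$. Since $S$ is smooth and geometrically connected by \autoref{prop:Smith}, the scheme $S_\cl{k}$ is normal and connected, hence irreducible, so $S$ is in fact geometrically irreducible. As $S$ is a closed subvariety of the irreducible $n$-dimensional variety $\Aa^n$, the equality $\dim S=n$ would force $S=\Aa^n$ and $G$ to act trivially, in which case the origin is a rational fixed point. Otherwise $\dim S\leq n-1\leq 3$, and since $k$ is perfect, \autoref{prop:chi_index}\eqref{prop:chi:1} delivers a zero-cycle on $S$ of degree $\chi(S)=1$.

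I do not anticipate a serious obstacle: the substantive ingredients (Smith theory, resolution of singularities in low dimensions, Gabber's alterations, Kato--Saito wild ramification) have already been encapsulated in the two propositions invoked above. The only minor care needed is the split into the degenerate case $\dim S = n$ versus $\dim S \leq 3$ in the second assertion, and this is settled at once by the geometric irreducibility of $S$.
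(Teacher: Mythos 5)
Your proof is correct and follows the paper's argument exactly: extract $\chi(S)=1$ from \autoref{prop:Smith} and feed it into \autoref{prop:chi_index}, disposing of the case of a trivial action separately so that $\dim S\leq 3$ when $n\leq 4$. The detour through geometric irreducibility is harmless but unnecessary, since any proper closed subvariety of the irreducible variety $\Aa^n$ already has dimension at most $n-1$.
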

\begin{proof}
Since $\chi((\Aa^n)\inv)=1$ by \autoref{prop:Smith}, the statement follows from \autoref{prop:chi_index} (we may assume that $G$ acts non-trivially, so that $\dim (\Aa^n)\inv \leq 3$ when  $n \leq 4$).
\end{proof}

\subsection{Actions of cyclic groups on \texorpdfstring{$\Aa^n$}{An}}

\begin{theorem}
\label{th:cyclic-group}
Let $k$ be a field of characteristic exponent $p$. Let $G$ be a finite cyclic group of order prime to $p$ acting on $\Aa^n$ over $k$. Then $(\Aa^n)\inv$ supports a zero-cycle whose degree is a power of $p$. If $k$ is perfect and $n \leq 4$, then $(\Aa^n)\inv$ supports a zero-cycle of degree one.
\end{theorem}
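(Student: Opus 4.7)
The plan is to follow the strategy of the proof of \autoref{th:l-group}: by \autoref{prop:chi_index}, it suffices to show that $\chi((\Aa^n)\inv) = 1$. In the $l$-group case this came from Smith's theory (\autoref{prop:Smith}), which is not available for cyclic groups of non-prime-power order; instead the weaker Euler-characteristic statement will be extracted from the Lefschetz trace formula.

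Let $\sigma$ be a generator of $G$, so that $(\Aa^n)\inv = (\Aa^n)^{\sigma}$ as $k$-varieties. Note that $(\Aa^n)\inv$ is smooth by the result of Edixhoven recalled in \S\ref{sect:group_actions}, since $|G|$ is prime to $\carac k$. The order of $\sigma$ is prime to $\carac k$, so the Lefschetz trace formula of Deligne--Lusztig, valid for automorphisms of finite order prime to the characteristic acting on separated schemes of finite type over $\cl{k}$ via $H^*_c$, gives
\[
\chi\bigl((\Aa^n)^{\sigma}\bigr) = \sum_i (-1)^i \Tr\bigl(\sigma^* \mid H^i_c(\Aa^n_{\cl{k}},\Ql)\bigr).
\]
The only non-vanishing term on the right-hand side occurs at $i = 2n$, where $H^{2n}_c(\Aa^n_{\cl{k}},\Ql) \simeq \Ql(-n)$ is one-dimensional. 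Since $\sigma$ is an automorphism of $\Aa^n$, it acts on this top compactly supported cohomology by $1$ (equivalently, by Poincar\'e duality, $\sigma$ fixes $H^0 = \Ql$). Therefore $\chi((\Aa^n)\inv) = 1$.

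Applying \autoref{prop:chi_index} then yields a zero-cycle on $(\Aa^n)\inv$ of degree a power of $p$, giving the first statement. For the sharper conclusion when $k$ is perfect and $n \leq 4$: one may assume that $G$ acts non-trivially, since otherwise the origin of $\Aa^n$ is a $k$-rational fixed point. As $G$ is cyclic, this forces $\dim (\Aa^n)\inv \leq n - 1 \leq 3$; the first clause of \autoref{prop:chi_index} then provides a zero-cycle of degree one.

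The main obstacle is invoking the Lefschetz trace formula in this generality, where $\Aa^n$ is non-proper and $\sigma$ has order merely prime to $\carac k$; this is precisely the content of Deligne--Lusztig's Lemma 3.2 (or Verdier's version). Once this input is granted, everything else is a direct citation of results already established earlier in \S\ref{sect:zero-cycles}.
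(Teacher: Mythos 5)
Your proposal is correct and follows essentially the same route as the paper: the paper likewise applies the Deligne--Lusztig trace formula to a generator $g$ of $G$, notes that $H^i_c(\Aa^n_{\cl{k}},\Ql)$ vanishes except in degree $2n$ where $G$ acts trivially (via $H^0 \simeq H^{2n}_c$), concludes $\chi((\Aa^n)\inv)=1$, and finishes with \autoref{prop:chi_index} exactly as in \autoref{th:l-group}. The only cosmetic difference is that the paper first reduces to $k$ perfect via \autoref{lemm:perfect_closure} before invoking the trace formula.
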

\begin{proof}
By \autoref{lemm:perfect_closure} we may assume that $k$ is perfect. Let $g$ be a generator of $G$. By a result of Deligne-Lusztig \cite[Theorem~3.2]{Deligne-Lusztig} we have
\begin{equation}
\label{eq:DL}
\sum_i (-1)^i \Tr\big(g^* : H^i_c(\Aa^n_\cl{k},\Ql)\big) = \chi\big((\Aa^n)\inv\big).
\end{equation}
Using \cite[Corollary~4.20]{Milne-etale} for $F=\Zz/l^m$ and taking the limit over $m$ and tensoring with $\Ql$ over $\Zl$, and using Poincar\'e duality, we see that $H^i_c(\Aa^n_\cl{k},\Ql) = 0$ if $i\neq 2n$ and $H^{2n}_c(\Aa^n_\cl{k},\Ql)=\Ql$. Moreover $G$ acts trivially on the set of connected components of $\Aa^n_\cl{k}$, and thus also on $H^0(\Aa^n_\cl{k},\Ql) \simeq H^{2n}_c(\Aa^n_\cl{k},\Ql)$. Therefore the value of the left hand side of \eqref{eq:DL} is $1$, and we conclude using \autoref{prop:chi_index}, as in the proof of \autoref{th:l-group}.
\end{proof}

\subsection{\texorpdfstring{$q$}{q}-special fields}
Let $q$ be a prime number. The field $k$ is called \emph{$q$-special} if the degree of every finite extension of $k$ is a power of $q$. The combination of \autoref{th:l-group} and \autoref{th:cyclic-group} is \autoref{th:1} of the introduction, and has the following consequence.
\begin{corollary}[of \autoref{th:1}]
Let $k$ be a field of characteristic exponent $p$. Assume that $k$ is $q$-special, for some prime number $q$ unequal to $p$. Then the action on $\Aa^n$ of any finite $l$-group with $l \neq p$, or any finite cyclic group of order prime to $p$, fixes a $k$-rational point.
\end{corollary}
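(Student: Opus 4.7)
The plan is to obtain this corollary as a short arithmetic consequence of \autoref{th:1}. Under either hypothesis on the group $G$, that theorem provides a zero-cycle on $S := (\Aa^n)\inv$ whose degree is a power of $p$. Since the degree of this cycle is a positive integer, $S$ is non-empty; and since the index $n_S$ divides the degree of every zero-cycle supported on $S$, it is itself a power of $p$.

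Next I would invoke the $q$-special hypothesis: every closed point $x$ of $S$ has $[\kappa(x):k]$ equal to a power of $q$, so $n_S$, being the g.c.d.\ of these degrees, is also a power of $q$. Since $p \neq q$, the only positive integer that is simultaneously a power of $p$ and a power of $q$ is $1$, forcing $n_S = 1$. Because all residue-field degrees at closed points of $S$ are powers of $q$ and their g.c.d.\ equals $1$, one of them must in fact equal $q^0 = 1$; the corresponding closed point is a $k$-rational point of $S$, hence a $k$-rational fixed point of the action.

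I do not anticipate any real obstacle: the whole argument is an elementary arithmetic matching between the power-of-$p$ information coming from \autoref{th:1} and the power-of-$q$ information coming from the definition of a $q$-special field. The only mild point to keep in mind is the case of characteristic zero, where by convention $p = 1$; then ``power of $p$'' simply means the integer $1$, and the same reasoning applies verbatim.
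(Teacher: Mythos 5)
Your argument is correct and is exactly the one the paper intends: the paper leaves the proof implicit, having already noted in the introduction that over a $q$-special field a zero-cycle of degree prime to $q$ is equivalent to a rational point, which is precisely the gcd-of-powers-of-$q$ observation you spell out. No gaps; the handling of the characteristic-zero convention $p=1$ and of non-emptiness is also fine.
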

This corollary applies to real-closed fields, which  are $2$-special and of characteristic zero. In particular, we obtain a purely algebraic proof for the case $k=\mathbb{R}$ (the classical proof uses algebraic topology, see e.g.\ \cite[\S5.4]{Esnault-Nicaise}).

\subsection{Separable forms of \texorpdfstring{$\Aa^n$}{An}}
\label{sect:forms}
As mentioned in the introduction, it is currently unknown whether every $\mathbb{R}$-variety $X$ such that $X_\mathbb{C} \simeq \Aa^n_\mathbb{C}$ is isomorphic to $\Aa^n_\mathbb{R}$ (for $n\geq 3$). A consequence of the next proposition is that $X$ must have an $\mathbb{R}$-point.

\begin{proposition}
Let $k$ be a field of characteristic exponent $p$, and $X$ a $k$-variety such that $X_{\cl{k}} \simeq \Aa^n_{\cl{k}}$. Then $X$ supports a zero-cycle whose degree is a power of $p$. If $k$ is perfect and $n \leq 3$, then $X$ supports a zero-cycle of degree one. If $k$ is $q$-special for some prime number $q$ unequal to $p$, then $X(k) \neq \varnothing$.
\end{proposition}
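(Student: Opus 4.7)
The plan is to derive all three assertions as formal consequences of \autoref{prop:chi_index}, combined with the observation that $\chi(X)=1$. First, since the Euler characteristic of a variety depends by definition only on its base-change to $\cl{k}$, the hypothesis $X_{\cl{k}}\simeq\Aa^n_{\cl{k}}$ gives $\chi(X)=\chi(\Aa^n_{\cl{k}})$. The latter equals $1$: this is the special case of \autoref{prop:Smith} for the trivial group (which is an $l$-group for any $l$ different from $\carac k$), or equivalently follows from the standard computation that $H^i_c(\Aa^n_{\cl{k}},\Ql)$ vanishes unless $i=2n$, in which case it is $\Ql$. Moreover $\dim X=\dim X_{\cl{k}}=n$, and $X$ is non-empty.

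The first claim is then immediate from part (ii) of \autoref{prop:chi_index}: it produces on $X$ a zero-cycle of degree $p^m\cdot\chi(X)=p^m$ for some integer $m$.

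For the second claim, the assumption $n\leq 3$ gives $\dim X\leq 3$, so part (i) of \autoref{prop:chi_index} applies and yields a zero-cycle of degree $\chi(X)=1$.

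For the third claim, I would combine the first claim with the $q$-special hypothesis. The index $n_X$ divides the degree $p^m$ of the zero-cycle obtained above, while on the other hand every closed point of $X$ has degree a power of $q$, so $n_X$ is itself a power of $q$. Since $p$ and $q$ are distinct primes, this forces $n_X=1$; being a power of $q$ equal to $1$ this means some closed point of $X$ already has degree one, so $X(k)\neq\varnothing$. The entire substantive work has been absorbed into \autoref{prop:chi_index}, so that the present proposition is essentially a formal packaging and there is no real obstacle to overcome.
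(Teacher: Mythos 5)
Your proposal is correct and follows exactly the paper's route: reduce everything to \autoref{prop:chi_index} via the computation $\chi(X)=\chi(\Aa^n_{\cl{k}})=1$, with the $q$-special case handled by the standard index argument (a zero-cycle of degree a power of $p$ forces $n_X=1$ over a $q$-special field with $q\neq p$, hence a rational point). The only cosmetic remark is that in characteristic zero the first claim comes from part (i) rather than part (ii) of \autoref{prop:chi_index} (a degree-one cycle being a power of $p=1$), but this does not affect the argument.
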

\begin{proof}
This follows from \autoref{prop:chi_index} and the fact that $\chi(\Aa^n) =1$, already observed in the course of the proof of \autoref{th:cyclic-group}.
\end{proof}

\section{Actions of \texorpdfstring{$l$}{l}-groups on \texorpdfstring{$\Aa^3$}{A3}}
\numberwithin{theorem}{section}
\numberwithin{lemma}{section}
\numberwithin{proposition}{section}
\numberwithin{corollary}{section}
\numberwithin{example}{section}
\numberwithin{notation}{section}
\numberwithin{definition}{section}
\numberwithin{remark}{section}

\label{sect:A3}
A field $F$ is called \emph{fertile} if any dense open subvariety of a smooth $F$-variety with an $F$-rational point has itself an $F$-rational point. When a (smooth) $k$-variety $S$ admits a smooth compactification $S'$, the following conditions are equivalent.
\begin{itemize}
\item Every compactification of $S$ has a $k$-rational point.

\item The variety $S$ has an $F$-rational point for any fertile field $F$ containing $k$.
\item The variety $S$ has a $k(\!(t)\!)$-rational point.
\end{itemize}

Indeed, any of these conditions is equivalent to the condition $S'(k) \neq \varnothing$ (this follows from Nishimura's Lemma, the valuative criterion of properness and the fact that $k(\!(t)\!)$ is fertile, see \cite[\S1.A.2)]{Pop-Survey}).

\begin{theorem}
\label{th:three}
Let $k$ be a perfect field, and $l$ a prime number different from its characteristic. Let $G$ be a finite $l$-group acting on $\Aa^3$ over $k$. Then the variety $S=(\Aa^3)\inv$ satisfies the three above conditions.
\end{theorem}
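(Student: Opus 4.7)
The plan is to apply \autoref{prop:Smith} to the variety $S=(\Aa^3)\inv$, reducing the statement to an intrinsic property of $S$, and then to analyse a smooth compactification case by case on $\dim S$. By that proposition, $S$ is smooth, geometrically connected, and $\Ql$-acyclic, with $\chi(S)=1$. If the $G$-action is trivial, then $S=\Aa^3$ and the conclusion is immediate, so we may assume $\dim S\le 2$. By the three-way equivalence stated before the theorem, it will suffice to produce one smooth compactification $S'$ of $S$ with $S'(k)\ne\varnothing$.

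The cases $\dim S\in\{0,1\}$ are direct. If $\dim S=0$, the reduced geometrically connected variety $S$ over the perfect field $k$ is necessarily isomorphic to $\Spec k$. If $\dim S=1$, let $S'$ be the smooth projective compactification of the smooth affine curve $S$ and set $D=S'\setminus S$, a nonempty finite set of closed points. The identity $1=\chi(S)=\chi(S')-\chi(D)=(2-2g(S'))-|D_{\cl{k}}|$ forces $g(S')=0$ and $|D_{\cl{k}}|=1$, so that $D$ is a single geometric point, $k$-rational by perfectness.

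The substantial case is $\dim S=2$. Using embedded resolution of singularities for surfaces (Abhyankar-Lipman), we first construct a smooth projective compactification $S'\supseteq S$ such that $D=S'\setminus S$ is a snc divisor in the sense of \S\ref{def:snc}, with smooth $k$-irreducible components. Let $\Gamma$ be the dual graph of $D_{\cl{k}}$: its vertices are the geometric irreducible components of $D$ and its edges are the intersection points of pairs of components. The $l$-adic weight spectral sequence attached to the open immersion $S\hookrightarrow S'$ identifies $\widetilde{H}^0(\Gamma,\Ql)$ and $H^1(\Gamma,\Ql)$ with weight-zero subquotients of $H^1(S_{\cl{k}},\Ql)$ and $H^2(S_{\cl{k}},\Ql)$ respectively; both vanish by the $\Ql$-acyclicity of $S$, so $\Gamma$ is a connected acyclic finite graph, i.e., a tree. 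The absolute Galois group $\Gal(\cl{k}/k)$ acts on this tree through a finite quotient, and by the classical tree fixed-point theorem it must stabilise either a vertex of $\Gamma$ or the midpoint of a unique edge. In the latter case the corresponding geometric intersection point of two boundary components is itself Galois-fixed, and yields the desired $k$-rational point of $S'$.

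The remaining sub-case, in which the Galois group stabilises a vertex $v$ of $\Gamma$ but no edge, is the main obstacle. The component associated to $v$ descends to a geometrically irreducible smooth $k$-curve $C\subseteq D$, and the problem reduces to producing $C(k)\ne\varnothing$, which is not automatic (a pointless conic is a counter-example in the abstract). The naive iteration of replacing $S$ by $S\cup C$ breaks because this enlargement is not $\Ql$-acyclic; instead, the plan is to exploit the specific structure of $\Gamma$ at $v$. The set of edges at $v$ corresponds to a Galois-stable finite subset of $C(\cl{k})$ on which the Galois action is fixed-point-free, so it contributes a closed point of $C$ whose degree is a product of Galois orbit sizes; combined with the constraint on $g(C)$ imposed by $\chi(D)=\chi(S')-1$ and the tree structure, this should force $C$ to be a Brauer-Severi curve split by a closed point of appropriate degree, and hence to be $\Pp^1_k$. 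The delicate bookkeeping, and the $H^1$-theoretic input that actually produces the required $k$-point (or equivalently the $k(\!(t)\!)$-point) from this configuration, is presumably the content of \S\ref{sect:h1}.
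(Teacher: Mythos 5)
Your reduction to $\dim S=2$, the treatment of the cases $\dim S\in\{0,1\}$, and the identification of the dual graph of the boundary $D=S'-S$ as a tree with all geometric components rational are all sound and essentially parallel to the paper (which derives the tree structure by an elementary Euler-characteristic count, $1+m_D=\sum_i\chi(D_i)-\chi(\Sing D_\cl{k})$ together with $\chi(D_i)\le 2$ and connectedness of the graph, rather than by the weight spectral sequence). Your use of the tree fixed-point theorem for the Galois action is also a legitimate alternative to the paper's leaf-removal induction (\autoref{lemm:tree}). But the case you flag as "the main obstacle" --- a Galois-fixed vertex whose component $C$ is a geometrically irreducible conic --- is a genuine gap, and the mechanism you propose for closing it does not work. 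The set of edges at the fixed vertex is a Galois-stable finite subset of $C(\cl{k})$ with no fixed point, so it yields a closed point of $C$ of degree a sum of orbit sizes all $\ge 2$; nothing forces this degree to be odd, and an even-degree point is useless against Springer's theorem. Worse, the configuration ``$D$ equals a single pointless conic'' satisfies every constraint you have derived (tree with one vertex, $H^1=0$, geometric connectedness), so no amount of bookkeeping with the tree alone can finish the argument.

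The missing idea is that $D$ carries a zero-cycle of \emph{degree one}, and this is where the hypothesis $n=3$ (more precisely, that $S$ is a smooth hypersurface in $\Aa^3$) enters irreplaceably. This is \autoref{lemm:Tan}: the conormal sequence shows $[\Tan_S]\in K_0(S)$ is pulled back from $K_0(\Aa^3)=\Zz$ (homotopy invariance of $K$-theory), hence $[\Tan_S]$ is the class of a trivial bundle and $c_2(\Tan_{S'})|_S=0$; by the localisation sequence $c_2(\Tan_{S'})$ is supported on $D$ and has degree $\chi(S')$ by \autoref{prop:degctan}, and subtracting a cycle of degree $\chi(D)$ (which $D$ supports, being a curve over a perfect field) leaves a zero-cycle of degree $\chi(S)=1$ on $D$. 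It is this odd-degree cycle that \autoref{prop:h1} propagates down the tree (tracking parity of indices when pruning leaves, using $\mu_T\mid n_T$) until it lands on a single conic, where Springer's theorem produces the rational point. Your proof cannot be completed without supplying this input or an equivalent one.
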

\begin{proof}
By \autoref{prop:Smith}, the variety $S$ is geometrically connected, smooth and satisfies $\chi(S)=1$. If $\dim S=0$, then $S$ must be a single rational point. If $\dim S=3$, then $S=\Aa^3$. In these two cases, the conclusion of the theorem holds.

We now assume that $\dim S \in \{1,2\}$. Since $k$ is perfect, the variety $S$ admits a smooth compactification $S'$ such that the closed subvariety $D=S'-S$ is the support of a snc divisor in $X$ (\S\ref{def:snc}), by \cite{Lipman-desingularization} and \cite[Theorem 9.2.26]{Liu}. It will suffice to prove that $S'(k) \neq \varnothing$. The variety $S'$ is geometrically connected because its dense open subvariety $S$ is so. Moreover $H^1_c(S_\cl{k},\Ql)=0$ by \autoref{prop:Smith}. The exact sequence of $\Ql$-vector spaces
\[
H^0(S'_\cl{k},\Ql) \to H^0(D_\cl{k},\Ql) \to H^1_c(S_\cl{k},\Ql)
\]
then shows that $D$ is geometrically connected. 

In case $\dim S=1$, the variety $D$ is non-empty because $S$ is affine. Thus $\dim D=0$, and $D$ must be a single rational point ($k$ is perfect), which concludes the proof in this case. But we can say more: the exact sequence of $\Ql$-vector spaces
\[
H^1_c(S_\cl{k},\Ql) \to H^1(S'_\cl{k},\Ql) \to H^1(D_\cl{k},\Ql)
\]
shows that $H^1(S'_\cl{k},\Ql)=0$. Thus the $k$-variety $S'$ is a smooth complete curve of genus zero (see e.g.\ \cite[IX, \S4]{SGA-4-3}), that is, a conic. It has a rational point, hence is isomorphic to $\Pp^1$, and $S= \Pp^1 -D \simeq \Aa^1$.

We now assume that $\dim S=2$. By semi-purity \cite[VI, Lemma~9.1]{Milne-etale}, the restriction morphisms $H^1_{\text{\'et}}(S'_\cl{k},\Zz/l^m) \to H^1_{\text{\'et}}(S_\cl{k},\Zz/l^m)$ are injective for all $m$, and by left-exactness of the inverse limit and flatness of $\Ql$ over $\Zl$, it follows that $H^1(S'_\cl{k},\Ql) \to H^1(S_\cl{k},\Ql)$ is injective. Since by \autoref{prop:Smith} we have $H^1(S_\cl{k},\Ql)=0$, we deduce that $H^1(S'_\cl{k},\Ql)=0$. \autoref{prop:Smith} also yields $H^2_c(S_\cl{k},\Ql)=0$, and the exact sequence of $\Ql$-vector spaces
\[
H^1(S'_\cl{k},\Ql) \to H^1(D_\cl{k},\Ql) \to H^2_c(S_\cl{k},\Ql)
\]
shows that $H^1(D_\cl{k},\Ql)=0$. By \autoref{lemm:Tan}, the variety $D$ supports a zero-cycle of degree $\chi(S)$, and $\chi(S)=1$ by \autoref{prop:Smith}. We prove in \autoref{prop:h1} below that $D(k) \neq \varnothing$, which implies that $S'(k) \neq \varnothing$. 
\end{proof}

\begin{remark}
Let $k$ be an arbitrary field, and $l$ a prime number different from its characteristic. Let $G$ be a finite $l$-group acting on $\Aa^n$ over $k$, with $n$ arbitrary. Let $S=(\Aa^n)\inv$. The proof of \autoref{th:three} shows the following.
\begin{itemize}
\item If $\dim S = n$, then $S=\Aa^n$.
\item If $\dim S = 0$, then $S$ is a single $k$-rational point
\item If $\dim S = 1$ and $k$ is perfect, then $S \simeq \Aa^1$. In case $n=2$, this was proved in \cite[Theorem~5.12]{Esnault-Nicaise} without assuming that $k$ is perfect.
\end{itemize}
However, our proof breaks down when $\dim S=2$ and $n >3$ because \autoref{lemm:Tan} below seems to be specific to subvarieties of codimension one in $\Aa^n$.
\end{remark}

\begin{lemma}
\label{lemm:Tan}
Assume that the field $k$ is perfect. Let $S$ be a smooth closed subvariety of pure codimension one in $\Aa^n$, with $2 \leq n \leq 4$. Let $S'$ be a smooth compactification of $S$. Then $D=S'-S$ supports a zero-cycle of degree $\chi(S)$.
\end{lemma}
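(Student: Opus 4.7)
The plan is to use that a smooth hypersurface $S \subset \Aa^n$ has trivial normal bundle, hence stably trivial tangent bundle, and then combine a Chern class / localization argument on $S'$ with \autoref{prop:chi_index} applied to $D$.

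Since $\Aa^n$ has trivial Picard group, the reduced codimension-one closed subscheme $S$ equals $V(f)$ for some polynomial $f \in k[t_1,\ldots,t_n]$, so the conormal sheaf $I_S/I_S^2$ is a free $\Oc_S$-module of rank one. The conormal exact sequence combined with the triviality of $\Tan_{\Aa^n}$ then yields $\Tan_S \oplus \Oc_S \simeq \Oc_S^n$, hence $c(\Tan_S) = 1$ in $\CH^*(S)$. Since $S$ is open in $S'$, the restriction $\Tan_{S'}|_S$ equals $\Tan_S$, so $c_i(\Tan_{S'})$ vanishes in $\CH^i(S)$ for every $i \geq 1$. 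Setting $d = \dim S' = n-1$, the class $c_d(\Tan_{S'}) \in \CH_0(S')$ lies in the kernel of the restriction $\CH_0(S') \to \CH_0(S)$. By the localization sequence $\CH_0(D) \to \CH_0(S') \to \CH_0(S) \to 0$, it is the pushforward of some zero-cycle $\alpha$ on $D$. Since proper pushforward preserves degrees, $\deg \alpha = \deg c_d(\Tan_{S'}) = \chi(S')$ by \autoref{prop:degctan}, so $D$ supports a zero-cycle of degree $\chi(S')$.

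To convert $\alpha$ into a zero-cycle of the required degree $\chi(S)$, I would use additivity of the Euler characteristic: $\chi(S) = \chi(S') - \chi(D)$. Since $k$ is perfect and $\dim D = n - 2 \leq 2 \leq 3$, \autoref{prop:chi_index} yields a zero-cycle on $D$ of degree $\chi(D)$; subtracting it from $\alpha$ gives the desired cycle. The main (and essentially only) obstacle is the dimensional restriction, which is precisely what forces $n \leq 4$: for larger $n$ the variety $D$ would have dimension exceeding $3$ and \autoref{prop:chi_index} would no longer apply, leaving no way to correct the degree of $\alpha$.
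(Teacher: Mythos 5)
Your proposal is correct and follows essentially the same route as the paper: show $c_{n-1}(\Tan_S)=0$ because $\Tan_S$ is stably trivial (the paper deduces this from $K_0(\Aa^n)=\Zz$ via homotopy invariance rather than from the triviality of the normal bundle, but the content is the same), push the top Chern class of $\Tan_{S'}$ into $\CH_0(D)$ via localisation, and correct the degree from $\chi(S')$ to $\chi(S)$ using \autoref{prop:chi_index} on $D$. No substantive differences.
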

\begin{proof}
The closed embedding $i\colon S \to \Aa^n$ is an effective Cartier divisor; let $L=\Oc(S)$ be the corresponding line bundle on $\Aa^n$. Then we have an exact sequence of vector bundles on $S$ \cite[(17.13.2.1)]{ega-4-4}
\[
0 \to \Tan_S \to i^*\Tan_{\Aa^n} \to i^*L \to 0.
\]
Thus $[\Tan_S] \in K_0(S)$ is in the image of the morphism $i^* \colon K_0(\Aa^n) \to K_0(S)$. By homotopy invariance \cite[\S6, Corollary of Theorem 9]{Qui-72}, the natural morphism $\Zz \to K_0(\Aa^n)$ is an isomorphism, and we deduce that $[\Tan_S]= \rank \Tan_S = n-1 \in K_0(S)$. This implies that $c_{n-1}(\Tan_S) = 0 \in \CH_0(S)$, because the Chern classes of a vector bundle depend only on its class in $\K_0$ \cite[Example~3.2.7 (b)]{Ful-In-98}, and the $(n-1)$-st Chern class of a trivial bundle vanishes (since $n -1 >0$). Now $\Tan_S = (\Tan_{S'})|_S$, hence $c_{n-1}(\Tan_{S'})|_S=0\in \CH_0(S)$. By the localisation sequence for Chow groups \cite[Proposition~1.8]{Ful-In-98}, it follows that the cycle class $c_{n-1}(\Tan_{S'})$ lies in the image of the morphism $\CH_0(D) \to \CH_0(S')$. Thus $D$ supports a zero-cycle of degree $\deg c_{n-1}(\Tan_{S'})$, and $\deg c_{n-1}(\Tan_{S'}) = \chi(S')$ by \autoref{prop:degctan}. Since the variety $D$ also supports a zero-cycle of degree $\chi(D)$ by \autoref{prop:chi_index}~\eqref{prop:chi:1} (because $n-1 \leq 3$), it must support one of degree $\chi(S) = \chi(S') - \chi(D)$.
\end{proof}

\section{One-dimensional snc divisors with no $H^1$}
\numberwithin{theorem}{subsection}
\numberwithin{lemma}{subsection}
\numberwithin{proposition}{subsection}
\numberwithin{corollary}{subsection}
\numberwithin{example}{subsection}
\numberwithin{notation}{subsection}
\numberwithin{definition}{subsection}
\numberwithin{remark}{subsection}

\label{sect:h1}
\subsection{The geometric number of components}

\begin{definition}
\label{def:m_mu}
Let $X$ be a variety. We define $m_X$, resp.\ $\mu_X$, as the number of irreducible, resp.\ connected, components of the $\cl{k}$-variety $X_{\cl{k}}$.
\end{definition}

When $X$ is a variety, we have
\begin{equation}
\label{eq:h0_mu}
\mu_X = \dim_{\Ql} H^0(X_\cl{k},\Ql).
\end{equation}

\begin{lemma}
\label{lemm:h2n_m}
Let $X$ be a variety of pure dimension $n$. Then 
\[
m_X = \dim_{\Ql} H^{2n}_c(X_\cl{k},\Ql).
\]
\end{lemma}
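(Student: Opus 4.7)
My plan is to reduce the question to a computation on the smooth locus, where Poincaré duality applies. First I would use \cite[VIII, Théorème 1.1]{SGA-4-2} to replace the separable closure $\cl{k}$ by an algebraic closure $\bar{k}$ of $k$; this change preserves both the cohomology group in question and $m_X$, since purely inseparable base change induces a homeomorphism on underlying topological spaces.

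Next, I would let $U \subseteq X_{\bar{k}}$ denote the regular locus. This is open by Zariski's theorem and dense (every generic point is regular), and it is smooth over the perfect field $\bar{k}$. Its complement $Z = X_{\bar{k}} \setminus U$ has dimension at most $n-1$, so $H^i_c(Z,\Ql) = 0$ for every $i \geq 2n-1$. The localisation sequence of compactly supported cohomology then gives an isomorphism
\[
H^{2n}_c(U,\Ql) \xrightarrow{\sim} H^{2n}_c(X_{\bar{k}},\Ql).
\]

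The third step is to identify the connected components of $U$ with the irreducible components of $X_{\bar{k}}$. A point lying on two distinct irreducible components has a local ring with more than one minimal prime, hence is not a domain and therefore not regular, so it belongs to $Z$. It follows that $U$ decomposes as the disjoint union, over the irreducible components $Y$ of $X_{\bar{k}}$, of the dense open subvarieties $U \cap Y$; each such $U \cap Y$ is irreducible (as a non-empty open subset of the irreducible $Y$) and hence connected. Thus $U$ is smooth of pure dimension $n$ with exactly $m_X$ connected components.

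Finally, Poincaré duality applied to $U$ yields $H^{2n}_c(U,\Ql) \cong H^0(U,\Ql)^\vee(-n)$, and the right-hand side has $\Ql$-dimension equal to the number of connected components of $U$, which is $m_X$. The argument is essentially routine, and I do not anticipate any serious obstacle; the only point requiring genuine attention is the correspondence in the third paragraph between the connected components of the regular locus of $X_{\bar{k}}$ and the irreducible components of $X_{\bar{k}}$, which relies crucially on the hypothesis that $X$ is reduced and of pure dimension $n$.
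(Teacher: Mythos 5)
Your proof is correct and follows essentially the same route as the paper: pass to a perfect (or algebraic) closure via SGA~4, VIII, Th\'eor\`eme~1.1, cut out the singular locus, kill the top-degree compactly supported cohomology of the complement for dimension reasons, and apply Poincar\'e duality on the smooth dense open subvariety. The only cosmetic difference is that you take $U$ to be the regular locus and verify explicitly that its connected components match the irreducible components of $X_{\cl{k}}$, where the paper takes an arbitrary smooth dense open and invokes $m_X=m_U=\mu_U$.
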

\begin{proof}
Replacing $k$ by a perfect closure affects neither $m_X$ (this operation does not affect the topological space $X_\cl{k}$) nor  $H^{2n}_c(X_\cl{k},\Ql)$ \cite[VIII, Th\'eor\`eme 1.1]{SGA-4-2}. Thus we may assume that $k$ is perfect. Let $U$ be a smooth dense open subvariety of $X$, and $Z$ its complement. We have an exact sequence of $\Ql$-vector spaces
\[
H^{2n-1}_c(Z_\cl{k},\Ql) \to H^{2n}_c(U_\cl{k},\Ql) \to H^{2n}_c(X_\cl{k},\Ql) \to H^{2n}_c(Z_\cl{k},\Ql),
\]
where the two extreme groups vanish since $\dim Z < n$. Using Poincar\'e duality and \eqref{eq:h0_mu}, we deduce that
\[
\dim_{\Ql} H^{2n}_c(X_\cl{k},\Ql) = \dim_{\Ql} H^{2n}_c(U_\cl{k},\Ql) =  \dim_{\Ql} H^0(U_\cl{k},\Ql) = \mu_U.
\]
The result follows, since $m_X=m_U$ (the set $U_{\cl{k}}$ is dense in the noetherian space $X_{\cl{k}}$), and $m_U=\mu_U$ (the scheme $U_{\cl{k}}$ is locally irreducible, being smooth).
\end{proof}

\begin{lemma}
\label{lemm:mu_n}
Let $X$ be a connected complete variety. Then $\mu_X \mid n_X$.
\end{lemma}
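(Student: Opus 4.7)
The plan is to produce a finite field extension $K/k$ attached to $X$ whose separable degree equals $\mu_X$ and whose total degree divides $n_X$. The natural candidate is $K = H^0(X,\Oc_X)$, and the argument will amount to showing that this single extension controls both invariants.

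First I would check that $K$ is a finite field extension of $k$. Completeness of $X$ makes $K$ a finite-dimensional $k$-algebra; reducedness of $X$ (as a variety) makes $K$ reduced; and connectedness of $X$ makes $\Spec K$ connected, because any nontrivial idempotent of $K$ would decompose $X$ into disjoint open-and-closed subschemes. A finite reduced $k$-algebra with connected spectrum is a field.

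Next I would identify $\mu_X$ with $[K:k]_s$. Base-changing the Stein factorisation $X \to \Spec K \to \Spec k$ along $\cl{k}/k$ yields a morphism $X_{\cl{k}} \to \Spec(K \otimes_k \cl{k})$ whose fibres are geometrically connected, so connected components of $X_{\cl{k}}$ correspond bijectively to those of $\Spec(K \otimes_k \cl{k})$. Letting $K_s \subseteq K$ be the maximal subextension separable over $k$, the algebra $K \otimes_k \cl{k}$ decomposes as a product of $[K_s:k]=[K:k]_s$ local Artinian $\cl{k}$-algebras, one for each $k$-embedding $K_s \hookrightarrow \cl{k}$, and consequently $\mu_X = [K:k]_s$.

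Finally, for each closed point $x \in X$ the structure morphism produces a ring homomorphism $K \to \kappa(x)$, making $\kappa(x)$ a $K$-algebra, so $[K:k] \mid [\kappa(x):k]$; taking the g.c.d.\ over closed points yields $[K:k] \mid n_X$. Combining everything gives $\mu_X = [K:k]_s \mid [K:k] \mid n_X$, which is the desired divisibility. The main obstacle is the first step — namely, verifying that $K$ is a field rather than merely a product of fields — since everything afterwards is a formal consequence of the multiplicativity of separable degrees and the explicit decomposition of $K \otimes_k \cl{k}$.
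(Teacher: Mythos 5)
Your proposal is correct and follows essentially the same route as the paper: both reduce the statement to the finite field extension $K=H^0(X,\Oc_X)$ (a field by completeness, reducedness and connectedness), identify $\mu_X$ with $[K:k]_s$, and observe that $[K:k]$ divides the degree of every closed point. You simply spell out more explicitly the two reduction steps ($\mu_X=\mu_{\Spec K}$ via Stein factorisation, and $n_{\Spec K}\mid n_X$ via the maps $K\to\kappa(x)$) that the paper states without elaboration.
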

\begin{proof}
When $X = \Spec F$ for a finite field extension $F/k$, the integer $\mu_X$ is the separable degree $[F:k]_s$, which divides the degree $[F:k]=n_X$.

In general, the $k$-algebra $F=H^0(X,\Oc_X)$ is reduced and finite, and moreover the $k$-variety $\Spec F$ is connected. Thus $F$ is a finite field extension of $k$. Since $\mu_{\Spec F} =\mu_X$, and $n_{\Spec F} \mid n_X$, we conclude using the special case treated above. 
\end{proof}

\subsection{The dual graph}
\begin{definition}
Let $X$ be a $k$-variety. The \emph{dual graph of $X$}, denoted henceforth $\Gamma_X$, is the undirected graph defined as follows. Its vertices are the irreducible components of of the $\cl{k}$-variety $X_{\cl{k}}$ (its cardinality is thus $m_X$). The set of edges between two irreducible components is empty if they coincide, and equal to the set of irreducible components (over $\cl{k}$) of their intersection otherwise.
\end{definition}

The graph $\Gamma_X$ is naturally endowed with an action of the absolute Galois group $\Gal(\cl{k}/k)$, and orbits in the set of vertices correspond bijectively to irreducible components of $X$ by \stacks{04KY}.

\begin{lemma}
A variety $X$ is geometrically connected if and only if its dual graph $\Gamma_X$ is connected.
\end{lemma}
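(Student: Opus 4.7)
The plan is to show that the equivalence relation on $V(\Gamma_X)$ generated by edges (used in the notation section to define connectedness of graphs) matches, under the bijection between vertices of $\Gamma_X$ and irreducible components of $X_{\cl{k}}$, the equivalence relation ``lying in the same connected component of $X_{\cl{k}}$''. Once this is done, $X_{\cl{k}}$ consists of a single connected component if and only if $\Gamma_X$ consists of a single equivalence class, which is the statement of the lemma.

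First I would use that $X_{\cl{k}}$ is noetherian: it has finitely many irreducible components and finitely many connected components, and each connected component, being both open and closed, is a finite union of irreducible components. Therefore two irreducible components $Y_1$ and $Y_2$ of $X_{\cl{k}}$ lie in the same connected component if and only if there exists a sequence $Y_1 = Z_0, Z_1, \dots, Z_r = Y_2$ of irreducible components satisfying $Z_i \cap Z_{i+1} \neq \varnothing$ for each $i$; this is a standard elementary fact about finite covers of a topological space by closed sets.

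Next I would translate the nonvanishing of $Z_i \cap Z_{i+1}$ into the existence of an edge in $\Gamma_X$: if $Z_i \neq Z_{i+1}$, then $Z_i \cap Z_{i+1}$ is a closed subset of the noetherian space $X_{\cl{k}}$, hence is nonempty if and only if it admits at least one irreducible component, i.e.\ if and only if $E(\{Z_i, Z_{i+1}\}) \neq \varnothing$. After discarding consecutive repetitions in the sequence $(Z_0, \dots, Z_r)$, this exhibits a path in $\Gamma_X$ joining $Y_1$ to $Y_2$; conversely, every path in $\Gamma_X$ produces a chain of pairwise intersecting irreducible components.

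Combining the two observations, the two equivalence relations coincide, and the lemma follows. There is no substantial obstacle; the only minor point is to match the emptiness conventions (an empty variety gives a graph with no vertices, hence zero equivalence classes, so neither is connected) and to handle the degenerate case $Y_1 = Y_2$, both of which are immediate.
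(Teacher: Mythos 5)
Your proof is correct and follows essentially the same route as the paper's: both arguments reduce to the facts that a clopen subset of the noetherian space $X_{\cl{k}}$ is a union of irreducible components and that, for two distinct components, an edge of $\Gamma_X$ exists precisely when their intersection is nonempty. The paper phrases graph connectedness via partitions of the vertex set with no edges between the parts, while you use the generated equivalence relation and chains of pairwise-intersecting components, but this is only a cosmetic difference.
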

\begin{proof}
First note that $X= \varnothing$ if and only if $\Gamma_X=\varnothing$. Associating to a set of vertices $V$ the union $\alpha(V)$ of the corresponding irreducible components yields a bijection between the subsets of $V(\Gamma_X)$ and the subsets of $X_\cl{k}$ which are unions of irreducible components of $X$. We have $\alpha(V_1 \cup V_2) = \alpha(V_1) \cup \alpha(V_2)$. Moreover $\alpha(V_1) \cap \alpha(V_2)=\varnothing$ if and only if $V_1 \cap V_2 = \varnothing$ and there are no edges between elements $V_1$ and $V_2$. To conclude the proof, note that a closed and open subset of $X_\cl{k}$ is necessarily a union of irreducible components of $X_\cl{k}$.
\end{proof}

\subsection{Zero-cycles of odd degree and rational points}
\begin{proposition}
\label{prop:h1}
Assume that $k$ is perfect. Let $X$ be a complete, geometrically connected variety of dimension one such that $H^1(X_\cl{k},\Ql)=0$. Assume that $X$ is the support of a snc divisor in some variety (\S\ref{def:snc}). If $X$ supports a zero-cycle of odd degree, then $X(k) \neq \varnothing$.
\end{proposition}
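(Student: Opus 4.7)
The plan is to exploit the Galois action on the dual graph $\Gamma_X$, reducing the problem to the classical fact that a conic is split by an odd-degree extension.

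\medskip

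First I unpack the geometry. Since $X$ is one-dimensional and the support of an snc divisor, $X_\cl{k}$ is a nodal curve with smooth components, at most two meeting at each node. The normalization $\nu\colon\tilde X_\cl{k}\to X_\cl{k}$ yields an exact sequence of \'etale sheaves $0\to\Ql\to \nu_*\Ql\to\bigoplus_{p\in \Sing X_\cl{k}}(\Ql)_p\to 0$, whose long exact sequence produces the standard decomposition
\[ H^1(X_\cl{k},\Ql)\simeq H^1(\Gamma_X,\Ql)\oplus\bigoplus_C H^1(C,\Ql),\]
where $C$ runs over $\cl{k}$-components. Hence $H^1(X_\cl{k},\Ql)=0$ forces $\Gamma_X$ to be a tree (with no multi-edges, per the paper's definition) and every $C$ to be $\Pp^1_\cl{k}$.

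\medskip

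Next I analyze the action of $\Gal(\cl{k}/k)$ on the finite tree $\Gamma_X$, which factors through a finite quotient. The center of a finite tree, being either one vertex or two adjacent vertices, is preserved by every graph automorphism, so there are two cases: either (A) some vertex $v$ is Galois-fixed, or (B) the center is an adjacent pair swapped by Galois, and the unique edge between them is then Galois-stable. In case (B), since $\Gamma_X$ is a simple tree this edge corresponds to a unique $\cl{k}$-intersection point of the two associated $\Pp^1$'s, which is Galois-fixed and gives $X(k)\neq\varnothing$.

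\medskip

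Case (A) is the heart of the argument. The fixed vertex $v$ corresponds to a $k$-irreducible, geometrically integral component $C_v$, smooth of genus zero, i.e., a conic over $k$. I claim: if $X$ supports a zero-cycle of odd degree, then so does $C_v$. Granting this, any conic with an odd-degree closed point is split (standard period-index argument, or Springer's theorem on quadratic forms), whence $C_v(k)\neq\varnothing$ and $X(k)\neq\varnothing$. To prove the claim, take a closed point $p\in X$ with $\deg p$ odd. If $p\in C_v$ we are done; otherwise $p$ lies in the $k$-subvariety corresponding to a Galois orbit $\{T_1,\dots,T_d\}$ of connected components of the forest $\Gamma_X\setminus\{v\}$. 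The Galois orbit of a $\cl{k}$-representative of $p$ permutes across the $T_i$'s, so $d\mid\deg p$, forcing $d$ odd. Because $\Gamma_X$ is a tree, each $T_i$ has a unique vertex adjacent to $v$, hence a unique attachment node on $C_v$; the $d$ such nodes form a single Galois orbit, yielding the desired closed point of $C_v$ of odd degree $d$. This combinatorial transfer of the odd-degree property from an arbitrary point of $X$ onto the conic $C_v$, exploiting uniqueness of attachments and Galois-equivariance, is the main obstacle of the proof.
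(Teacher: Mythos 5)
Your proof is correct, and the second half takes a genuinely different route from the paper's. The first step (deducing from $H^1(X_\cl{k},\Ql)=0$ that $\Gamma_X$ is a tree and every geometric component is $\Pp^1_\cl{k}$) is obtained in the paper by an Euler-characteristic count ($\chi(X)=\sum_i\chi(X_i)-\chi(I)$ combined with $\chi(X_i)\le 2$ and the edge bound for a connected graph), whereas you use the normalization sequence; these are equivalent in content, and both rely on the snc hypothesis to know that exactly two components pass through each singular point. The real divergence is in the arithmetic step. The paper proceeds by induction on the number of $k$-irreducible components, pruning the Galois orbit of a leaf and showing that the odd-index property propagates to the remaining subtree via the attachment points; the base case is a single component, which is either a pair of conjugate lines (rational node) or a conic (Springer). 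You instead invoke the automorphism-invariance of the center of a finite tree to produce in one stroke either a Galois-stable edge (whose node is rational since $k$ is perfect and each node lies on exactly two components, so the edge is a single $\cl{k}$-point) or a Galois-fixed vertex, i.e.\ a conic $C_v$ defined over $k$, onto which you transfer the odd-degree zero-cycle by the branch argument: a closed point off $C_v$ determines a Galois orbit of $d$ branches with $d\mid\deg p$, and the $d$ distinct attachment nodes form a closed point of $C_v$ of odd degree $d$. This avoids the induction entirely and is arguably more economical; the two approaches use the same essential ingredients (uniqueness of attachment points in a tree, perfectness of $k$ to convert Galois orbits into closed points of the expected degree, and Springer's theorem), so each buys roughly the same thing, with yours trading the paper's inductive bookkeeping for the classical fact about tree centers.
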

\begin{proof}
Since $\dim X \leq 1$, the variety $I=\Sing(X)$ is finite. It follows from \cite[(6.7.4)]{ega-4-2} that $I_{\cl{k}}=\Sing(X_{\cl{k}})$. Let $X_1,\cdots,X_{m_X}$ be the irreducible components of $X_{\cl{k}}$. Since $X_{\cl{k}}$ is the support of a snc divisor in some variety (\autoref{lemm:snc_separable}), each point of $I_{\cl{k}}$ belongs to exactly two $X_i$'s. Conversely any point of $X_i \cap X_j$ with $i\neq j$ belongs to $I_{\cl{k}}$. It follows that $I_{\cl{k}}$ is the disjoint union of the varieties underlying $X_i \cap X_j$ for $i < j$, and that $\chi(I)$ is the number of edges of $\Gamma_X$. By \autoref{lemm:triple_union} below, it follows that
\[
\chi(X) = \sum_i \chi(X_i) - \sum_{i < j} \chi(X_i \cap X_j) = \sum_i \chi(X_i) - \chi(I)
\]
From the assumption on $H^1(X_{\cl{k}},\Ql)$ and by \autoref{lemm:h2n_m}, we deduce that
\begin{equation}
\label{eq:sum}
1+m_X = \sum_{i=1}^{m_X} \chi(X_i) - \chi(I).
\end{equation}
Since for each $i$, the $\cl{k}$-variety $X_i$ is irreducible, we have 
\begin{equation}
\label{eq:chi2}
\chi(X_i) = 1- \dim_{\Ql} H^1(X_i,\Ql) +1 \leq 2.
\end{equation}
From \eqref{eq:sum} and \eqref{eq:chi2} we deduce that $\chi(I) \leq m_X-1$ with equality if and only if $H^1(X_i,\Ql)=0$ for each $i$. But $\Gamma_X$ is a connected graph with $m_X$ vertices and $\chi(I)$ edges, hence $\chi(I) \geq m_X-1$. Thus $\chi(I)=m_X -1$ and $\Gamma_X$ is a tree. In addition, for each $i$, the $\cl{k}$-variety $X_i$ is a smooth connected curve, whose genus is zero because $H^1(X_i,\Ql)=0$ by \eqref{eq:chi2} (see e.g.\ \cite[IX, \S4]{SGA-4-3}). It follows that $X_i \simeq \Pp^1_{\cl{k}}$ for each $i$. We conclude using \autoref{lemm:tree} below.
\end{proof}

\begin{lemma}
\label{lemm:triple_union}
Let $X_1,\cdots,X_n$ be closed subvarieties of $X$ such that $X_i \cap X_j \cap X_l=\varnothing$ whenever $i,j,l$ are pairwise distinct. If $X=X_1 \cup \cdots \cup X_n$, then
\[
\chi(X) = \sum_i \chi(X_i) - \sum_{i<j} \chi(X_i \cap X_j).
\]
\end{lemma}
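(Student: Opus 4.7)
The plan is to prove this by induction on $n$, using the additivity of $\chi$ with respect to closed-open decompositions (the relation $\chi(X) = \chi(Z) + \chi(X-Z)$ for a closed subvariety $Z \subseteq X$, already recorded after the definition of $\chi$). The base case $n=1$ is vacuous, and the case $n=2$ is exactly this additivity applied to $Z = X_2 \subseteq X_1 \cup X_2$, combined with the identification $X - X_2 = X_1 - (X_1 \cap X_2)$.

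For the inductive step, set $Y = X_1 \cup \cdots \cup X_{n-1}$, viewed as a closed subvariety of $X$. Additivity of $\chi$ gives
\[
\chi(X) = \chi(Y) + \chi(X - Y) = \chi(Y) + \chi(X_n - (X_n \cap Y)) = \chi(Y) + \chi(X_n) - \chi(X_n \cap Y),
\]
where the last step uses additivity again on the pair $(X_n, X_n \cap Y)$. The inductive hypothesis, applied to the decomposition $Y = X_1 \cup \cdots \cup X_{n-1}$ (which still satisfies the pairwise-triple-empty hypothesis), yields
\[
\chi(Y) = \sum_{i<n} \chi(X_i) - \sum_{i<j<n} \chi(X_i \cap X_j).
\]

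The key observation is that $X_n \cap Y = \bigcup_{i<n} (X_n \cap X_i)$, and the hypothesis $X_i \cap X_j \cap X_n = \varnothing$ for $i \neq j$ with both $i,j < n$ forces this to be a \emph{disjoint} union of closed subvarieties. Iterated additivity of $\chi$ (or the easy case $n=2$ applied repeatedly) then gives $\chi(X_n \cap Y) = \sum_{i<n} \chi(X_n \cap X_i)$. Substituting this and the inductive expression for $\chi(Y)$ into the first display yields exactly the desired formula; there is no genuine obstacle, the only point that requires the hypothesis is the disjointness of the intersections $X_n \cap X_i$ used to handle $\chi(X_n \cap Y)$.
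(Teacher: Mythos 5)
Your proof is correct and follows essentially the same route as the paper: induction on $n$ via the decomposition $Y = X_1 \cup \cdots \cup X_{n-1}$, additivity of $\chi$ for the closed-open pair $(Y, X-Y)$, and the observation that the triple-intersection hypothesis makes $X_n \cap Y$ the disjoint union of the $X_n \cap X_i$. No substantive difference to report.
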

\begin{proof}
We proceed by induction on $n$, the case $n=0$ being clear. When $n >0$, let $Y=X_1 \cup \cdots \cup X_{n-1}$. Then $Y\cap X_n$ is the disjoint union of the varieties $X_i\cap X_n$ for $i < n$, and using the induction hypothesis,
\begin{align*}
\chi(X) 
&=\chi(X_n) + \chi(Y) - \chi(Y \cap X_n)  \\ 
&= \chi(X_n) + \sum_{i<n} \chi(X_i) - \sum_{i<j<n} \chi(X_i \cap X_j) - \sum_{i <n} \chi(X_i \cap X_n)\\
&= \sum_i \chi(X_i) - \sum_{i<j} \chi(X_i \cap X_j).\qedhere
\end{align*}
\end{proof}

\begin{lemma}
\label{lemm:tree}
Assume that $k$ is perfect. Let $X$ be a complete $k$-variety such that $\Gamma_X$ is a tree and every irreducible component of the $\cl{k}$-variety $X_{\cl{k}}$ is isomorphic to $\Pp^1_{\cl{k}}$. If $X$ supports a zero-cycle of odd degree, then $X(k) \neq \varnothing$.
\end{lemma}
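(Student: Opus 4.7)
The plan is to reduce to Springer's theorem on conics (a smooth conic over $k$ carrying a zero-cycle of odd degree has a $k$-rational point), using the combinatorics of the finite tree $\Gamma_X$ acted on by $\Gal(\cl{k}/k)$ through a finite quotient. I will invoke the classical fact that any action of a finite group on a finite tree fixes either a vertex or an edge (as an unordered pair of its endpoints). Throughout, the hypothesis that $X$ supports a zero-cycle of odd degree translates to the assertion that $n_X$ is odd.

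If the Galois action on $\Gamma_X$ stabilises some edge, then the corresponding singular point of $X_{\cl{k}}$ is Galois-invariant, hence descends to a $k$-rational point of $X$, and we are done. Otherwise, the tree center lemma produces a Galois-fixed vertex $v \in V(\Gamma_X)$; the associated irreducible component $Y \subseteq X$ is then defined over $k$, geometrically isomorphic to $\Pp^1_{\cl{k}}$, and smooth (as $k$ is perfect), hence a smooth conic over $k$. By Springer's theorem, it suffices to prove that $n_Y$ is odd. I would argue by contradiction: assume $n_Y$ is even, and let $W \subseteq X$ be the union of the $k$-irreducible components of $X$ distinct from $Y$. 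Since $X = Y \cup W$, we have $n_X = \gcd(n_Y, n_W)$, and it therefore suffices to show that $n_W$ is also even.

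The geometric connected components of $W$ are in Galois-equivariant bijection with the connected components $T_1, \dots, T_s$ of $\Gamma_X \setminus \{v\}$; each $T_i$ is joined to $v$ by a unique edge $e_i$ corresponding to a singular point $p_i \in Y(\cl{k})$, and the Galois action on $\{T_1, \dots, T_s\}$ agrees with that on $\{p_1, \dots, p_s\}$. Since $n_Y$ is assumed even, every closed point of $Y$ has even degree, so each Galois orbit on $\{p_1, \dots, p_s\}$ has even cardinality. Consequently every $k$-connected component $C$ of $W$ satisfies $\mu_C$ even, and \autoref{lemm:mu_n} yields $\mu_C \mid n_C$; hence $n_C$ is even for every such $C$, and so $n_W$ is even, producing the desired contradiction. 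The main obstacle is organising this combinatorial translation between Galois orbits of vertices, edges, subtrees, singular points and $k$-connected components of $W$, and invoking \autoref{lemm:mu_n} as a bridge from the geometric count $\mu_C$ to the arithmetic index $n_C$; Springer's theorem and the tree center lemma are used as standard black-box inputs.
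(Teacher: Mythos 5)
Your proof is correct, but it takes a genuinely different route from the paper's. The paper argues by induction on the number of irreducible components of $X$: it peels off the Galois orbit $T$ of a leaf of $\Gamma_X$, checks (using $\mu_{T\cap Y}=\mu_T$, \autoref{lemm:mu_n} and the perfectness of $k$) that the closure $Y$ of $X-T$ still has odd index, and terminates with an irreducible $X$, where transitivity of the Galois action on the tree forces at most two geometric components --- handled either by a Galois-fixed intersection point or by Springer's theorem. You instead invoke the fixed-point theorem for finite group actions on finite trees once and for all: a stabilised edge gives a Galois-stable closed point of $X_{\cl{k}}$, hence (as $k$ is perfect) a rational point; a fixed vertex gives a smooth conic $Y$ over $k$, and your parity argument on the branches hanging off $v$ --- each geometric connected component of $W$ is attached to $Y_{\cl{k}}$ through a unique edge at $v$, so if $n_Y$ were even every Galois orbit of branches would have even size, whence $2\mid \mu_C \mid n_C$ for each connected component $C$ of $W$ by \autoref{lemm:mu_n} --- shows $n_Y$ is odd, so Springer's theorem applies. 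The two arguments use the same basic inputs (\autoref{lemm:mu_n}, Springer's theorem, perfectness of $k$); yours trades the induction for the tree-centre lemma and is arguably more direct. One small imprecision worth fixing: the assignment sending an edge at $v$ to the corresponding point of $Y(\cl{k})$ need not be injective (two branches may be attached at the same geometric point), so the Galois actions on $\{T_1,\dots,T_s\}$ and on $\{p_1,\dots,p_s\}$ need not literally agree; but the map is equivariant, so each orbit of branches surjects onto an orbit of points and its cardinality is a multiple of the (even) degree of the underlying closed point of $Y$, which is all your argument needs.
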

\begin{proof}
We first reduce to the case when $X$ is irreducible by induction on the number of irreducible components of $X$. Let us assume that $X$ is not irreducible, and let $T$ be the irreducible component of $X$ corresponding to the $\Gal(\cl{k}/k)$-orbit of a leaf in $\Gamma_X$. Note that the closure $Y$ of $X-T$ in $X$ is non-empty. We may view the graphs $\Gamma_T$ and $\Gamma_Y$ as full subgraphs of $\Gamma_X$, compatibly with the $\Gal(\cl{k}/k)$-actions, and the set of vertices of $\Gamma_X$ is the disjoint union of the set of vertices of $\Gamma_T$ and $\Gamma_Y$. Since each vertex of $\Gamma_T$ is a leaf of the tree $\Gamma_X$, it follows that $\Gamma_Y$ is a tree (removing a leaf from a tree yields a tree or the empty graph). The irreducible components of $Y_{\cl{k}}$, being among those of $X_{\cl{k}}$, are isomorphic to $\Pp^1_{\cl{k}}$.

Let $P$ be an irreducible component of $T_{\cl{k}}$. We claim that in the graph $\Gamma_X$ there is exactly one edge one of whose extremities is $P$, and that the other extremity is an irreducible component of $Y_{\cl{k}}$. Since $P$ is a leaf in the tree $\Gamma_X$, there is at most one such edge. If there are none, then $P$ is the only vertex of $\Gamma_X$, hence $P=X_{\cl{k}}$ and $X$ is irreducible, a contradiction. Thus there is exactly one such edge, let $Q$ be its other extremity. If $Q$ is an irreducible component of $T_{\cl{k}}$, it is a leaf of the tree $\Gamma_X$, and $P,Q$ are the only two vertices of $\Gamma_X$. Then $T=X$ is again irreducible, a contradiction. Therefore $Q$ is an irreducible component of $Y_{\cl{k}}$, which proves the claim.

From the claim we deduce that the irreducible components of $T_{\cl{k}}$ are pairwise disjoint (so that $m_T=\mu_T$), and are in bijection with the points  of the finite $\cl{k}$-variety $(T \cap Y)_{\cl{k}}$. Thus $\mu_{T \cap Y} = \mu_T$. If $n_T$ is even, then $n_Y$ must be odd. If $n_T$ is odd, then so is $\mu_T$  by \autoref{lemm:mu_n}, and thus also $\mu_{T \cap Y}$. The latter is the sum of the separable degrees of the residue fields of the points of the finite variety $T \cap Y$, which are in particular closed points of $Y$. Since $k$ is perfect, we deduce again that $n_Y$ must be odd. Therefore in any case we may conclude the proof using the induction hypothesis for the variety $Y$.

Thus we may assume that $X$ is irreducible. Since the group $\Gal(\cl{k}/k)$ acts transitively on the tree $\Gamma_X$, each of the vertices of $\Gamma_X$ is a leaf. It follows that the graph $\Gamma_X$ has no more than two vertices (and thus at most one edge). If $X_{\cl{k}}$ has two irreducible components, they meet in a single point (over $\cl{k}$), which coincides with $\Sing(X_{\cl{k}}) = \Sing(X)_{\cl{k}}$ \cite[(6.7.4)]{ega-4-2}. Since $k$ is perfect, it follows that $\Sing(X)$ is a rational point of $X$. Otherwise, the variety $X$ is geometrically irreducible. Thus $X_{\cl{k}} \simeq \Pp^1_{\cl{k}}$, hence $X$ is a smooth conic over $k$. Since $X$ supports a zero-cycle of odd degree, it must possess a rational point by Springer's theorem.
\end{proof}

\begin{remark}
It is not necessary to assume that $X$ is the support of a snc divisor in \autoref{prop:h1}. An earlier version of this paper contained a proof of this more general statement, but as pointed out to me by Johannes Nicaise, \autoref{prop:h1} suffices for the proof of \autoref{th:three} and is substantially shorter to prove.
\end{remark}


\begin{thebibliography}{{Pop}14}
\bibitem[CT05]{Colliot-Finitude}
Jean-Louis Colliot-Th{\'e}l{\`e}ne.
\newblock Un th\'eor\`eme de finitude pour le groupe de {C}how des
  z\'ero-cycles d'un groupe alg\'ebrique lin\'eaire sur un corps {$p$}-adique.
\newblock {\em Invent. Math.}, 159(3):589--606, 2005.

\bibitem[CP14]{CP-3}
Vincent Cossart and Olivier Piltant.
\newblock Resolution of singularities of arithmetical threefolds {II}.
\newblock {\em Preprint}, 2014.
\newblock \href{http://arxiv.org/abs/1412.0868}{\texttt{arXiv:1412.0868}}.

\bibitem[DL76]{Deligne-Lusztig}
P.~Deligne and G.~Lusztig.
\newblock Representations of reductive groups over finite fields.
\newblock {\em Ann. of Math. (2)}, 103(1):103--161, 1976.

\bibitem[Edi92]{Edixhoven-Neron}
Bas Edixhoven.
\newblock N\'eron models and tame ramification.
\newblock {\em Compositio Math.}, 81(3):291--306, 1992.

\bibitem[EN11]{Esnault-Nicaise}
H{\'e}l{\`e}ne Esnault and Johannes Nicaise.
\newblock Finite group actions, rational fixed points and weak {N}\'eron
  models.
\newblock {\em Pure Appl. Math. Q.}, 7(4, Special Issue: In memory of Eckart
  Viehweg):1209--1240, 2011.

\bibitem[Ful98]{Ful-In-98}
William Fulton.
\newblock {\em Intersection theory}, volume~2 of {\em Ergebnisse der Mathematik
  und ihrer Grenzgebiete. 3. Folge. A Series of Modern Surveys in Mathematics}.
\newblock Springer-Verlag, Berlin, second edition, 1998.

\bibitem[GD75]{GD-Aut-affine}
M.~H. Gizatullin and V.~I. Danilov.
\newblock Automorphisms of affine surfaces. {I}.
\newblock {\em Izv. Akad. Nauk SSSR Ser. Mat.}, 39(3):523--565, 703, 1975.

\bibitem[GLL13]{Index}
Ofer Gabber, Qing Liu, and Dino Lorenzini.
\newblock The index of an algebraic variety.
\newblock {\em Invent. Math.}, 192(3):567--626, 2013.

\bibitem[Gro65]{ega-4-2}
Alexander Grothendieck.
\newblock \'{E}l\'ements de g\'eom\'etrie alg\'ebrique. {IV}. \'{E}tude locale
  des sch\'emas et des morphismes de sch\'emas. {II}.
\newblock {\em Inst. Hautes \'Etudes Sci. Publ. Math.}, (24):231, 1965.

\bibitem[Gro67]{ega-4-4}
Alexander Grothendieck.
\newblock \'{E}l\'ements de g\'eom\'etrie alg\'ebrique. {IV}. \'{E}tude locale
  des sch\'emas et des morphismes de sch\'emas {IV}.
\newblock {\em Inst. Hautes \'Etudes Sci. Publ. Math.}, (32):361, 1967.

\bibitem[Hir64]{Hir-64}
Heisuke Hironaka.
\newblock Resolution of singularities of an algebraic variety over a field of
  characteristic zero. {I}, {II}.
\newblock {\em Ann. of Math. (2)}, 79:109--203, 205--326, 1964.

\bibitem[ILO]{Gabber-book}
Luc Illusie, Yves Laszlo, and Fabrice Orgogozo.
\newblock {\em Travaux de Gabber sur l'uniformisation locale et la cohomologie
  \'etale des sch\'emas quasi-excellents. S\'eminaire \`a l'Ecole polytechnique
  2006--2008}.
\newblock Avec la collaboration de Fr\'ed\'eric D\'eglise, Alban Moreau,
  Vincent Pilloni, Michel Raynaud, Jo\"el Riou, Beno\^it Stroh et Michael
  Temkin. \href{http://arxiv.org/abs/1207.3648}{\texttt{arXiv:1207.3648}}.

\bibitem[Kam75]{Kam-forms}
T.~Kambayashi.
\newblock On the absence of nontrivial separable forms of the affine plane.
\newblock {\em J. Algebra}, 35:449--456, 1975.

\bibitem[Kra96]{Kraft-Bourbaki}
Hanspeter Kraft.
\newblock Challenging problems on affine {$n$}-space.
\newblock {\em Ast\'erisque}, (237):Exp.\ No.\ 802, 5, 295--317, 1996.
\newblock S{\'e}minaire Bourbaki, Vol. 1994/95.

\bibitem[KS08]{Kato-Saito}
Kazuya Kato and Takeshi Saito.
\newblock Ramification theory for varieties over a perfect field.
\newblock {\em Ann. of Math. (2)}, 168(1):33--96, 2008.

\bibitem[Lan02]{Lang-Algebra}
Serge Lang.
\newblock {\em Algebra}, volume 211 of {\em Graduate Texts in Mathematics}.
\newblock Springer-Verlag, New York, third edition, 2002.

\bibitem[Lip78]{Lipman-desingularization}
Joseph Lipman.
\newblock Desingularization of two-dimensional schemes.
\newblock {\em Ann. Math. (2)}, 107(1):151--207, 1978.

\bibitem[Liu02]{Liu}
Qing Liu.
\newblock {\em Algebraic geometry and arithmetic curves}, volume~6 of {\em
  Oxford Graduate Texts in Mathematics}.
\newblock Oxford University Press, Oxford, 2002.
\newblock Translated from the French by Reinie Ern{\'e}, Oxford Science
  Publications.

\bibitem[L{\"u}t93]{L-Nagata}
W.~L{\"u}tkebohmert.
\newblock On compactification of schemes.
\newblock {\em Manuscripta Math.}, 80(1):95--111, 1993.

\bibitem[Mil80]{Milne-etale}
James~S. Milne.
\newblock {\em \'{E}tale cohomology}, volume~33 of {\em Princeton Mathematical
  Series}.
\newblock Princeton University Press, Princeton, N.J., 1980.

\bibitem[Pop96]{Pop-large}
Florian Pop.
\newblock Embedding problems over large fields.
\newblock {\em Ann. of Math. (2)}, 144(1):1--34, 1996.

\bibitem[{Pop}14]{Pop-Survey}
Florian {Pop}.
\newblock {Little survey on large fields -- old \& new.}
\newblock In {\em {Valuation theory in interaction. Proceedings of the 2nd
  international conference and workshop on valuation theory, Segovia and El
  Escorial, Spain, July 18--29, 2011}}, pages 432--463. Z\"urich: European
  Mathematical Society (EMS), 2014.

\bibitem[Qui73]{Qui-72}
Daniel Quillen.
\newblock Higher algebraic {$K$}-theory. {I}.
\newblock In {\em Algebraic {$K$}-theory, {I}: {H}igher {$K$}-theories ({P}roc.
  {C}onf., {B}attelle {M}emorial {I}nst., {S}eattle, {W}ash., 1972)}, pages
  85--147. Lecture Notes in Math., Vol. 341. Springer, Berlin, 1973.

\bibitem[Ser09]{Serre-finite_fields}
Jean-Pierre Serre.
\newblock How to use finite fields for problems concerning infinite fields.
\newblock In {\em Arithmetic, geometry, cryptography and coding theory}, volume
  487 of {\em Contemp. Math.}, pages 183--193. Amer. Math. Soc., Providence,
  RI, 2009.

\bibitem[Sha66]{Sha-infinite}
I.~R. Shafarevich.
\newblock On some infinite-dimensional groups.
\newblock {\em Rend. Mat. e Appl. (5)}, 25(1-2):208--212, 1966.


\bibitem[SGA72]{SGA-4-2}
{\em Th\'eorie des topos et cohomologie \'etale des sch\'emas. {T}ome 2}.
\newblock Lecture Notes in Mathematics, Vol. 270. Springer-Verlag, Berlin-New
  York, 1972.
\newblock S{\'e}minaire de G{\'e}om{\'e}trie Alg{\'e}brique du Bois-Marie
  1963--1964 (SGA 4), Dirig{\'e} par M. Artin, A. Grothendieck et J. L.
  Verdier. Avec la collaboration de N. Bourbaki, P. Deligne et B. Saint-Donat.


\bibitem[SGA73]{SGA-4-3}
{\em Th\'eorie des topos et cohomologie \'etale des sch\'emas. {T}ome 3}.
\newblock Lecture Notes in Mathematics, Vol. 305. Springer-Verlag, Berlin-New
  York, 1973.
\newblock S{\'e}minaire de G{\'e}om{\'e}trie Alg{\'e}brique du Bois-Marie
  1963--1964 (SGA 4), Dirig{\'e} par M. Artin, A. Grothendieck et J. L.
  Verdier. Avec la collaboration de P. Deligne et B. Saint-Donat.


\bibitem[{Sta}15]{stacks}
The {Stacks Project Authors}.
\newblock {S}tacks {P}roject.
\newblock \url{http://stacks.math.columbia.edu}, 2015.

\end{thebibliography}
\end{document}